\addspace\texttt{\mkbibbrackets{\thefield{arxivclass}}}}}}
\addspace\texttt{\mkbibbrackets{\thefield{arxivclass}}}}}}
\renewcommand\tableofcontents{%
    \@starttoc{toc}%

}
\newcommand\shorttitle{A gradient flow of Spin(7)-structures} 
\newcommand\authors{S. Dwivedi} 
\newcommand{\rest}[2]{ { \left. {#1} \right|}_{{#2} }}
\newcounter{commentCounter}
\ifodd\value{page}
\authors
\shorttitle
\newcommand*{\rom}[1]{\expandafter\@slowromancap\romannumeral #1@}
\newtheorem{theorem}{Theorem}[section]
\newtheorem{lemma}[theorem]{Lemma}
\newtheorem{proposition}[theorem]{Proposition}
\theoremstyle{definition}
\newtheorem{definition}[theorem]{Definition}
\newtheorem{remark}[theorem]{Remark}
\numberwithin{equation}{section}
\def\bR{\mathbb R}
\newcommand{\mb}[1]{\mathbf{#1}}
\def\wtd{\widetilde}
\def\pt{\partial}
\def\del{\nabla}
\def\G2{\mathrm{G}_2}
\def\g2{\varphi}
\def\S7{\mathrm{Spin}(7)}
\def\s7{\Phi}
\def\cL{\mathcal{L}}
\def\cS{\mathcal{S}}
\def\Spin7{\mathrm{Spin(7)}}
\def\Ric{\mathrm{Ric}}
\def\Riem{\mathrm{Rm}}
\def\SO{\mathrm{SO}}
\def\dots7{\Dot{\Phi}}
\DeclareMathOperator\Div{div}
\DeclareMathOperator\vol{vol}
\DeclareMathOperator\tr{tr}
\DeclareMathOperator\Id{Id}
\begin{document}

\title{
A gradient flow of Spin(7)-structures}
\author{Shubham Dwivedi}
\date{}

\maketitle

\begin{abstract}
\noindent
We formulate and study the negative gradient flow of an energy functional of Spin(7)-structures on compact
$8$-manifolds. The energy functional is the $L^2$-norm of the torsion of the Spin(7)-structure. Our main result is the short-time existence and uniqueness of solutions to the flow. We also explain how this negative gradient flow contains, as the highest order terms, all independent second order differential invariants of Spin(7)-structures which can be made into an admissible $4$-form. We also study solitons of the flow and prove a non-existence result for compact expanding solitons.
\end{abstract}


\let\thefootnote\relax\footnotetext{\emph{MSC (2020): 53E99, 53C29, 53C21, 53C15.}}

\section{Introduction}\label{sec:intro}

Given an $8$-dimensional smooth manifold $M$, a Spin(7)-structure on $M$ is a reduction of the structure group of the frame bundle $\text{Fr}(M)$ to the Lie group Spin(7) $\subset \mathrm{SO}(8)$. While many $8$-manifolds satisfying the topological condition \eqref{eq:topcond} admit Spin(7)-structures, the existence of \emph{torsion-free} Spin(7)-structures is a difficult problem. Given the success of geometric flows in proving the existence of special geometric structures on manifolds it is natural to attempt to tackle such questions using a suitable geometric flow of Spin(7)-structures.

\medskip

\noindent
Unlike the case of geometric flows of $\G2$-structures, there have been fewer studies of flows of Spin(7)-structures. A flow of isometric/harmonic Spin(7)-structures was introduced and studied in detail in \cite{dle-isometric}. The authors proved various analytic and geometric results for the flow. This flow can be seen as an instance of the more general theory of \emph{harmonic} $H$-structures which has been studied in a fairly detailed manner in \cite{loubeau-saearp} and \cite{FLMS}. The isometric flow enjoys many nice analytical properties and has long time existence and convergence to a critical point under mild initial conditions (see the above papers for details). However, one disadvantage of the isometric flow is that it runs in an \emph{isometry} class of a given Spin(7)-structure and hence the metric remains fixed throughout the flow. We remedy this here by studying the most general flow of Spin(7)-structures (in particular, the metric is also changing) which include the isometric flow as a special case. On a fixed oriented spin Riemannian $8$-manifold, a Spin(7)-structure is equivalent (up to sign) to a unit spinor field. Using a spinorial approach, Ammann--Weiss--Witt \cite{AWW} also studied the negative gradient flow of a Dirichlet energy, thought of as a function of a unit spinor field. They proved general short-time existence and uniqueness. In contrast to their work, our approach is more direct and uses the Riemannian geometric properties of a Spin(7)-structure with the flow equation and related quantities all given in terms of the curvature of the underlying metric and the torsion of the Spin(7)-structure. We also describe all possible second order quasilinear flows of Spin(7)-structures. A study of some natural functionals of Spin(7)-structures and the corresponding Euler-Lagrange equations has also been done recently by Krasnov \cite{Krasnov}.

\medskip

\noindent
Our point of view of studying general flows of Spin(7)-structures is that of \cite{dgk-flows} where the authors study the most general second order flows of $\G2$-structures by classifying all linearly independent second order (in the $\G2$-structure) tensors coming from the Riemann curvature tensor and the covariant derivative of the torsion which can be made into a $3$-form.

\medskip

\noindent
We now describe the energy functional and its negative gradient flow. More details are given in \textsection~\ref{sec: Basics of the Spin(7)-flow}.

\begin{definition}
Let $M^8$ be a compact manifold which admits Spin(7)-structures. The \emph{energy functional} $E$ on the set of Spin(7)-structures on $M$ is
\begin{align}
\label{E1}
    E(\s7)=\frac 12 \int_M |T_{\s7}|^2 \vol_{\s7}     
\end{align}
where $T_{\s7}$ is the torsion of the Spin(7)-structure $\s7$ and the norm and the volume form are with respect to the metric induced by $\s7$. The critical points of the functional $E$ are torsion-free Spin(7)-structures which are also absolute minimizers. We call $E$ an \enquote{energy functional} because it follows from \eqref{eq:Texpress} that upto some constant, $E$ is essentially $\int_M |\del \Phi|^2 \vol$ and thus is similar to the Dirichlet energy functional for functions.
\end{definition}

\noindent
We compute the first variation of $E$ in Lemma~\ref{lemma:Energyvar} which also gives the negative gradient of the energy functional. We describe the negative gradient flow below. Let $T$ denote the torsion of the Spin(7)-structure $\Phi$. See \textsection~\ref{sec:prelims} and \textsection~\ref{sec: Basics of the Spin(7)-flow} for the definition of the terms and more details.

\begin{definition}[Negative gradient flow]
Let $(M^8, \s7_0)$ be a compact manifold with a Spin(7)-structure. 
The negative gradient flow of the energy functional $E$ is the flow of Spin(7)-structures which is the  following initial value problem
\begin{align} 
\label{F1} 
 \left\{\begin{array}{rl} 
      & \dfrac{\pt \s7}{\pt t} = \left(-\Ric+2(\cL_{T_8}g) + T*T -|T|^2g + 2 \Div T\right) \diamond \s7, \\
      & \s7(0) =\s7_0.
    \tag{GF}
   \end{array}\right.
\end{align}
where $\Ric$ is the Ricci curvature of the underlying metric, $T_8$ is the $8$-dimensional component of the torsion tensor $T$, $T*T$ is the tensor described in Definition~\ref{def:ene} and $\diamond$ is the operator described in \eqref{eq:diadefn1}. Here $\Div T$ is the divergence of the torsion tensor and is defined in \eqref{eq:divT}.
\end{definition}

\medskip

\noindent
Given any geometric flow, one of the main questions is whether there exist a solution to the flow for a short time and if any two solutions starting with the same initial conditions are unique. The main theorem of the paper is as follows.

\medskip

\noindent
{\textbf{Theorem~\ref{thm:ste}.}} \emph{Let $(M^8, \Phi_0)$ be a compact $8$-manifold with a Spin(7)-structure $\Phi_0$ and consider the negative gradient flow of the natural energy functional $E$ in \eqref{E1}. This is the flow \eqref{gfloweqn}. Then there exists $\varepsilon >0$ and a unique smooth solution $\Phi(t)$ of \eqref{gfloweqn} for $t\in [0, \varepsilon)$ with $\varepsilon=\varepsilon(\Phi_0)$.}

\medskip

\noindent
We prove the main theorem by explicitly calculating the principal symbols of the operators involved in the flow and then using a modified DeTurck's trick.

\medskip

\noindent
As discussed at the end of \textsection~\ref{sec:prelims}, the most general flow of Spin(7)-structures which is second order can be written as
\begin{align}\label{genflow}
\frac{\pt}{\pt t}\Phi(t)=(-a\Ric+b\cL_{T_8}g + c\Div T + \text{l.o.t})\diamond \Phi    
\end{align}
where $a, b, c\in \bR$ and $\text{l.o.t}$ are the terms which are lower order in $\Phi$. The situation in the Spin(7)-case is a bit simpler than the corresponding case of the $\G2$-structures where there are more terms which could appear in a flow of $\G2$-structures \cite[\textsection 1]{dgk-flows}. This is because of the fact that for $p\in M$, the subbundle $A_pM$ of \emph{admissible} 4-forms in $\Lambda^4T^*_pM$ has codimension $27$ and thus any variation of a Spin(7)-structure is given only by $4$-forms in $\Omega^4_{1+7+35}$. Consequently, many naturally occurring tensors, for example the $27$-dimensional Weyl curvature tensor, cannot be a candidate for a flow of Spin(7)-structures (see \textsection~\ref{sec:prelims} for details) even though they can be made into a $4$-form. 

\medskip

\noindent
The methods in \textsection~\ref{sec:ste} can be used to establish short-time existence and uniqueness results for the general flow in \eqref{genflow} subjected to conditions on the coefficients $a, b$ and $c$ which will be sufficient conditions to apply the modified DeTurck's trick described in \textsection~\ref{subsec:mdtt}. The advantage of looking at a specific flow in the family \eqref{genflow}, which for us is \eqref{gfloweqn} which corresponds to $a=-1, b, c=2$ is that it also provides us with the lower order terms which are important when one wants to study further analytic properties of the flow. An example of the importance of lower order terms is our result on the non-existence of compact expanding solitons for the flow in \textsection~\ref{sec:solitons}.

\medskip

\noindent
Solitons of a geometric flow are special solutions which move only by scalings and diffeomorphisms. These are also called self-similar solutions. A soliton can be viewed as a generalized fixed point of the flow modulo scalings and diffeomorphisms. These special solutions are important because they serve as singularity models for the flow. We study solitons of the flow \eqref{gfloweqn}. We derive equations for the underlying metric and the divergence of the torsion of a soliton solution. We expect that if we have a Cheeger--Gromov--Hamilton type compactness theorem for the solutions of the flow then the solitons will play a role in understanding the behaviour of the flow near a singularity.  

\medskip

\noindent
The paper is organized as follows. We start by reviewing the notion of Spin(7)-structures, the decomposition of space of forms and the torsion tensor in \textsection~\ref{sec:prelims}. The first variation of the energy functional is computed in \textsection~\ref{sec: Basics of the Spin(7)-flow}. We give a brief review of elliptic and parabolic differential operators in \textsection~\ref{sec:diff-ops} which is followed by the computations of principal symbols of first and second order nonlinear operators in \textsection~\ref{subsec:psymb}. In \textsection~\ref{subsec:ste}, we show that the failure of parabolicity of the negative gradient flow of the energy functional is only due to the diffeomorphism invariance of the tensors involved and this leads to a modified DeTurck's trick in \textsection~\ref{subsec:mdtt}. The main theorem, Theorem~\ref{thm:ste}, is proved in \textsection~\ref{subsec:stef}. Finally, in \textsection~\ref{sec:solitons} we study solitons of the flow and prove Proposition~\ref{prop:solmainprop} which is a non-existence result for compact expanding solitons of the flow.

\medskip

\noindent
Throughout the paper, we compute in a local orthonormal frame, so all indices are subscripts and any
repeated indices are summed over all values from $1$ to $8$. Our convention for labelling the Riemann curvature tensor is
\begin{align*}
R_{ijkl}=g\left(\del_{e_i}(\del_{e_j} e_k)-\del_{e_j}(\del_{e_i} e_k)-\del_{[e_i, e_j]} e_k,e_l\right)  
\end{align*}
in a local orthonormal frame and as a result, the Ricci tensor is $R_{ij}=R_{lijl}$. We also have the Ricci identity which, for instance, for a $2$-tensor $S$ reads as
\begin{align}\label{eq:ricciiden}
\del_i\del_jS_{kl}-\del_j\del_iS_{kl}=-R_{ijkm}S_{ml}-R_{ijlm}S_{km}. 
\end{align}

\medskip

\noindent
\textbf{Acknowledgements.} We are grateful to Panagiotis Gianniotis and Spiro Karigiannis for various discussions on this project in particular and on flows of special geometric structures in general. We would like to thank Thomas Walpuski for discussions on the torsion decomposition of Spin(7)-structures, Ragini Singhal for explaining the representation theoretic aspects of Spin(7)-structures and Udhav Fowdar for his interest in the paper and for catching typos. Finally, we are grateful to the anonymous referees for a very careful reading of the paper and for their various suggestions which have improved the paper. A part of this work was done when the author was a Simons-CRM Scholar-in-Residence at the CRM, Montreal. We would like to thank the Simons Foundation and the CRM for funding and CRM for providing a wonderful environment to work in.

\section{Preliminaries on Spin(7)-structures}\label{sec:prelims}

In this section, we briefly review the notion of a Spin(7)-structure on an $8$-dimensional manifold $M$ and the associated decomposition of differential forms. We also discuss the torsion tensor of a Spin(7)-structure. More details can be found in \cite[Chapter 10]{joycebook} and \cite{karigiannis-spin7}.

\medskip

 \noindent
 A Spin(7)-structure on $M$ is a particular type of $4$-form $\s7$ on $M$. The existence of such a structure is a \emph{topological condition}. Concretely, an $8$-manifold admits a Spin(7)-structure if and only if it is orientable, spinnable, conditions which are equivalent to the vanishing of the first and second Stiefel--Whitney classes respectively, and for some orientation on M,
 \begin{align}\label{eq:topcond}
 p_1^2-4p_2+8\chi=0,
\end{align}where $p_i$ is the i-th Pontryagin class and $\chi$ is the Euler class of $M$ \cite[Theorem 10.7]{lawson-michelsohn}. The space of $4$-forms which determine a Spin(7)-structure on $M$ is a subbundle $A$ of $\Omega^4(M)$, called the bundle of \emph{admissible} $4$-forms. This is \emph{not} a vector subbundle and it is not even an open subbundle, unlike the case for $\G2$-structures. For $p\in M$, the subbundle $A_p(M)$ is of codimension $27$ in $\Lambda^4(T^*_pM)$.

\medskip

\noindent
A Spin(7)-structure $\s7$  determines a Riemannian metric and an orientation on $M$ in a nonlinear way which can be described in local frame as follows. Let $p\in M$ and extend  a non-zero tangent vector $v\in T_pM$  to a local frame $\{v, e_1, \cdots , e_7\}$ near $p$. Define
\begin{align*}
    B_{ij}(v)&=((e_i\lrcorner v\lrcorner \s7)\wedge (e_j\lrcorner v\lrcorner \s7)\wedge (v\lrcorner \s7))(e_1, \cdots , e_7),\\
    A(v)&=((v\lrcorner \s7)\wedge \s7)(e_1, \cdots, e_7).
\end{align*}
Then the metric induced by $\s7$ is given by
\begin{align}
\label{eq:metric}
    (g_{\s7}(v,v))^2=-\frac{7^3}{6^{\frac{7}{3}}}\frac{(\textup{det}\ B_{ij}(v))^{\frac 13}}{A(v)^3}.    
\end{align}
The metric and the orientation determine a Hodge star operator $\star$, and the $4$-form is \emph{self-dual}, i.e., $\star \s7=\s7$. 

\begin{definition}
    Let $\del$ be the Levi-Civita connection of the metric $g_{\s7}$. The pair $(M^8, \s7)$ is a \emph{Spin(7)-manifold} if $\del \s7=0$. This is a non-linear partial differential equation for $\s7$, since $\del$ depends on $g$, which in turn depends non-linearly on $\s7$. A Spin(7)-manifold has Riemannian holonomy contained in the subgroup $\S7\subset \SO(8)$. Such a parallel Spin(7)-structure is also called \emph{torsion free}. 
\end{definition}

\subsection{Decomposition of the space of forms}
\label{subsec:formdecomp}

The existence of a Spin(7)-structure $\s7$ induces a decomposition of the space of differential forms on $M$  into irreducible Spin(7)-representations. We have the following orthogonal decompositions, with respect to $g_\s7$:
\begin{align*}
\Omega^2=\Omega^2_{7}\oplus \Omega^2_{21},\ \ \ \ \ \ \ \Omega^3=\Omega^3_{8}\oplus \Omega^3_{48}, \ \ \ \ \ \ \ \ \  \Omega^4=\Omega^4_{1}\oplus \Omega^4_{7}\oplus \Omega^4_{27}\oplus \Omega^4_{35},
\end{align*}
where $\Omega^k_l$ has pointwise dimension $l$. Explicitly, $\Omega^2$ and $\Omega^3$ are described as follows:
\begin{align}
    \Omega^2_7&=\{\beta \in \Omega^2 \mid \star(\s7 \wedge \beta)=-3\beta\}, \label{eq:27decomp1}\\
    \Omega^2_{21}&=\{ \beta\in \Omega^2 \mid \star(\s7\wedge \beta)=\beta\}, \label{eq:221decomp1}
\end{align}
and
\begin{align}
    \Omega^3_8&=\{ X\lrcorner \s7 \mid X\in \Gamma(TM)\}, \label{eq:38decomp1}\\
    \Omega^3_{48}&=\{ \gamma \in \Omega^3\mid \gamma \wedge \s7 =0\}. \label{eq:348decomp1}
\end{align}
For our computations, it is useful to describe the spaces of forms in local frame. For $\beta\in \Omega^2(M)$,
\begin{align}
    \beta_{ij}\in \Omega^2_7 \iff \beta_{ab}\s7_{abij}&=-6\beta_{ij},\label{eq:27decomp2}\\
    \beta_{ij}\in \Omega^2_{21} \iff \beta_{ab}\s7_{abij}&=2\beta_{ij}.\label{eq:221decomp2}
\end{align}

\begin{remark}
A convention different than ours is also prevalent in the literature. In this convention, $\Omega^2_7$ and $\Omega^2_{21}$ are the $+3$ and $-1$ eigenspaces of the map $\beta\mapsto *(\Phi\wedge \beta),$ respectively. As a result, the constants on the right hand sides of \eqref{eq:27decomp2} and \eqref{eq:221decomp2} are $+6$ and $-2$ respectively.  
\end{remark}

\noindent
For $\gamma\in \Omega^3(M)$,
\begin{align}
    \gamma_{ijk}\in \Omega^3_8 &\iff \gamma_{ijk}=X_l\s7_{ijkl}\ \ \ \ \textup{for\ some}\ X\in \Gamma(TM), \label{eq:38decomp2}\\
    \gamma_{ijk}\in \Omega^3_{48} &\iff \gamma_{ijk}\s7_{ijkl}=0. \label{eq:348decomp2}
\end{align}
If $\pi_7$ and $\pi_{21}$ are the projection operators on $\Omega^2$, it follows from \eqref{eq:27decomp2} and \eqref{eq:221decomp2} that 
\begin{align}
\pi_7(\beta)_{ij}&=\frac 14\beta_{ij}-\frac 18\beta_{ab}\s7_{abij}, \label{eq:pi7}\\
\pi_{21}(\beta)_{ij}&=\frac 34\beta_{ij}+\frac 18\beta_{ab}\s7_{abij}. \label{eq:pi21}
\end{align}
We will be using these equations throughout the paper.
Finally, for $\beta_{ij}\in \Omega^2_{21}$,
\begin{align}
    \beta_{ab}\s7_{bpqr}&=\beta_{pi}\s7_{iqra}+\beta_{qi}\s7_{irpa}+\beta_{ri}\s7_{ipqa}, \label{eq:221prop}   
\end{align}
which can be used to show that $\Omega^2_{21}\equiv \mathfrak{so}(7)$ is the Lie algebra of Spin(7), with the commutator of matrices
\begin{align*}
    [\alpha, \beta]_{ij}=\alpha_{il}\beta_{lj}-\alpha_{jl}\beta_{li}.
\end{align*}

\medskip

\noindent
To describe $\Omega^4$ in local orthonormal frame, we use the $\diamond$ operator which was first described in \cite{dgk-isometric} for the $\G2$ case and in \cite{dle-isometric} for the Spin(7) (see also \cite[eq. (1.14)]{FLMS} for the general case of $H$-structures). Given $A\in \Gamma(T^*M\otimes TM)$, define
\begin{align}
\label{eq:diadefn1}
    A\diamond \s7= \frac{1}{24}(A_{ip}\s7_{pjkl}+A_{jp}\s7_{ipkl}+A_{kp}\s7_{ijpl}+A_{lp}\s7_{ijkp})e^i\wedge e^j\wedge e^k\wedge e^l,    
\end{align}
and hence 
\begin{align}
\label{eq:diadefn2}
    (A\diamond \s7)_{ijkl}=  A_{ip}\s7_{pjkl}+A_{jp}\s7_{ipkl}+A_{kp}\s7_{ijpl}+A_{lp}\s7_{ijkp}. 
\end{align}
Recall that $
    \Gamma(T^*M\otimes TM)=\Omega^0\oplus S_0\oplus \Omega^2$, where $S_0$ is the space of trace-free symmetric $2$-tensors and $\Omega^2$ further splits orthogonally into \eqref{eq:27decomp1} and \eqref{eq:221decomp1}, so
\begin{align}
\label{eq:splitting TM* x TM}
 \Gamma(T^*M\otimes TM)=\Omega^0\oplus S_0 \oplus \Omega^2_7\oplus \Omega^2_{21}.   
\end{align}
With respect to this splitting, we can write $A=\frac 18 (\tr A)g+A_{35}+A_7+A_{21}$ where $A_{35}$ is a symmetric traceless $2$-tensor. 
The diamond contraction \eqref{eq:diadefn2} defines a linear map $A\mapsto A\diamond \s7$, from $\Omega^0\oplus S_0 \oplus \Omega^2_7\oplus \Omega^2_{21}$ to $\Omega^4(M)$. We record the following properties of the $\diamond$ operator whose proof can be found in \cite{karigiannis-spin7} or \cite{dle-isometric}.

\begin{proposition}\label{prop:diaproperties2}
Let $(M, \s7)$ be a manifold with a Spin(7)-structure. Then
\begin{enumerate}
\item The Hodge star of $A\diamond \s7$ is
\begin{align}\label{diapropertieseqn1}
\star(A\diamond \s7)&=(\Bar{A}\diamond \s7)\ \  \textup{where}\ \  \Bar{A}_{ij}=\frac 14 (\tr A)g_{ij}-A_{ji}.    
\end{align}

\item If $B\in \Gamma(T^*M\otimes TM)$ and $A_{35},\  B_{35}$ denote the traceless symmetric parts of $A$ and $B$ respectively and $A_7,\ B_7$ denote their $\Omega^2_7$ component, then
\begin{align}\label{diapropertieseqn2}
\langle A\diamond \s7, B\diamond \s7 \rangle &= 84(\tr A)(\tr B)+96\langle A_{35}, B_{35}\rangle + 384 \langle A_7, B_7 \rangle . 
\end{align}
\qed
\end{enumerate}
\end{proposition}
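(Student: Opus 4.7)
The plan is to reduce everything to the irreducible decomposition \eqref{eq:splitting TM* x TM}. Since $A\mapsto A\diamond\s7$ is linear, I would write $A=\frac18(\tr A)g+A_{35}+A_7+A_{21}$ and analyze each summand separately. The first step is to establish four ``building block'' identities: (a) $g\diamond\s7=4\s7\in\Omega^4_1$; (b) $A_{21}\diamond\s7=0$; (c) $A_7\diamond\s7\in\Omega^4_7$, and in particular is self-dual; (d) $A_{35}\diamond\s7\in\Omega^4_{35}$, and is anti-self-dual. Item (a) is immediate from \eqref{eq:diadefn2}. For (b), I would substitute the characterizing identity \eqref{eq:221prop} into each of the four terms of $(A_{21}\diamond\s7)_{ijkl}$; after relabelling indices with the antisymmetry of $\s7$, the twelve resulting terms cancel in pairs. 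For (c) and (d), the key tool is that the components in $\Omega^4_1,\Omega^4_7,\Omega^4_{27},\Omega^4_{35}$ can be detected by contracting a $4$-form with $\s7_{ijkm}$; this requires the standard Spin(7) contraction identity for $\s7_{ijkp}\s7_{ijkq}$ (proportional to $\delta_{pq}$) together with its six-index analogue, which expresses $\s7_{ijpq}\s7_{ijrs}$ as a linear combination of $\delta_{pr}\delta_{qs}-\delta_{ps}\delta_{qr}$ and $\s7_{pqrs}$ with universal coefficients.

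Part (1) now follows case by case. For the trace piece $A=\frac18(\tr A)g$ we have $\overline{A}_{ij}=\frac14(\tr A)g_{ij}-A_{ji}=A_{ij}$, so the equation $\star(A\diamond\s7)=\overline{A}\diamond\s7$ reduces to self-duality of $\s7$. For the symmetric traceless piece, $\overline{A_{35}}=-A_{35}$, matching anti-self-duality of $A_{35}\diamond\s7$. For $A_7$, one has $\tr A_7=0$ and $A_7^T=-A_7$, hence $\overline{A_7}=A_7$, which matches the self-duality from (c). For $A_{21}$ both sides are zero by (b). Summing the four components recovers \eqref{diapropertieseqn1}.

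Part (2) uses the orthogonality of the Spin(7) isotypic decomposition of $\Omega^4$, which forces all cross terms $\langle A_c\diamond\s7,B_{c'}\diamond\s7\rangle$ with $c\ne c'$ to vanish pointwise. It remains to evaluate three pointwise norms: $|g\diamond\s7|^2$, $|A_{35}\diamond\s7|^2$ as a multiple of $|A_{35}|^2$, and $|A_7\diamond\s7|^2$ as a multiple of $|A_7|^2$. Each is a direct (if slightly tedious) application of the same Spin(7) contraction identities already used in the building-block step: one expands the four-fold sum defining $A\diamond\s7$, pairs like terms, and applies the contraction identities for $\s7_{ijkp}\s7_{ijkq}$ and $\s7_{ijpq}\s7_{ijrs}$. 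The numerical constants $24,\ 96,\ 384$ in \eqref{diapropertieseqn2} appear at the end as a cross-check that the normalizations of $g_{\s7}$ and of $\s7$ in \eqref{eq:metric} are compatible.

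The main technical obstacle is careful bookkeeping with the Spin(7) contraction identities combined with the antisymmetry of $\s7$ and the symmetry type (symmetric, skew of type $7$, or skew of type $21$) of $A$; the only genuinely nontrivial algebraic input beyond linear algebra is the identity \eqref{eq:221prop}, which is what forces the $\Omega^2_{21}$ component to drop out and thereby confines the image of the $\diamond$ map inside $\Omega^4_1\oplus\Omega^4_7\oplus\Omega^4_{35}$.
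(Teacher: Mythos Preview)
The paper does not actually prove this proposition: it is stated with the remark that ``the proof can be found in \cite{karigiannis-spin7} or \cite{dle-isometric}'' and closed with a \qed. So there is no in-paper argument to compare against; your proposal supplies what the paper outsources.

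Your approach is correct and is essentially the one taken in the cited references: decompose $A$ according to \eqref{eq:splitting TM* x TM}, use \eqref{eq:221prop} to kill the $\Omega^2_{21}$ piece, and evaluate the remaining pieces with the contraction identities \eqref{eq:impiden1}--\eqref{eq:impiden4}. Two small remarks. First, your steps (c) and (d) as written identify $A_7\diamond\s7\in\Omega^4_7$ and $A_{35}\diamond\s7\in\Omega^4_{35}$ via the $\Lambda_\s7$-eigenspace description, but to conclude self-/anti-self-duality you then invoke \eqref{eq:sd&asd}, which is part of Proposition~\ref{prop:427decomp} stated \emph{after} Proposition~\ref{prop:diaproperties2}; this is logically harmless (Proposition~\ref{prop:427decomp} does not depend on Proposition~\ref{prop:diaproperties2}), but you should flag the forward reference. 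Second, one can alternatively prove part~(1) in a single stroke, without first establishing (c) and (d): compute $\star(A\diamond\s7)$ directly from the self-duality $\star\s7=\s7$ and the behaviour of the Hodge star under the contraction \eqref{eq:diadefn2}, obtaining $\bar A\diamond\s7$ for general $A$; then (c) and (d) become corollaries of \eqref{diapropertieseqn1} rather than ingredients for it. Either route is fine.
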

\noindent
More properties of the $\diamond$ operator for general $H$-structures have been proved in \cite[Lemma 1.4]{FLMS}.

\medskip

\noindent
The following proposition was originally proved  in \cite[Prop. 2.3]{karigiannis-spin7}.
\begin{proposition}\label{prop:diaproperties1}
The kernel of the map $A\mapsto A\diamond \s7$ is isomorphic to the subspace $\Omega^2_{21}$. The remaining three summands $\Omega^0,\ S_0$ and $\Omega^2_7$ are mapped isomorphically onto the subspaces $\Omega^4_1,\ \Omega^4_{35}$ and $\Omega^4_7$ respectively.
\end{proposition}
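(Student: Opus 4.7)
The plan is to combine the norm identity \eqref{diapropertieseqn2} with the Hodge-star relation \eqref{diapropertieseqn1} to locate the image of each summand of \eqref{eq:splitting TM* x TM} inside $\Omega^4$, and to exploit the Spin(7)-equivariance of $\diamond$ (which is immediate from the Spin(7)-invariance of $\Phi$) to pin down the irreducible component.

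First, setting $B=A$ in \eqref{diapropertieseqn2} gives
\[ |A\diamond \Phi|^2 = 24(\tr A)^2 + 96|A_{35}|^2 + 384|A_7|^2, \]
with no contribution from the $\Omega^2_{21}$-piece. Applying this to a pure $\Omega^2_{21}$-element forces $A_{21}\diamond \Phi=0$, so $\Omega^2_{21}\subseteq \ker(\diamond)$; the same identity shows $\diamond$ is injective on $\Omega^0\oplus S_0\oplus \Omega^2_7$. Next, a short computation of $\overline{A}=\tfrac14(\tr A)g - A^T$ on each summand yields $\overline{A}=A$ for $A\in \Omega^0$, $\overline{A}=-A$ for $A\in S_0$, and $\overline{A}=A$ for $A\in \Omega^2_7$. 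Substituting into \eqref{diapropertieseqn1} gives $\star(A\diamond \Phi)=+A\diamond \Phi$ when $A\in \Omega^0\oplus \Omega^2_7$ and $\star(A\diamond \Phi)=-A\diamond \Phi$ when $A\in S_0$. Since the Spin(7)-decomposition refines the Hodge splitting as $\Omega^4_+=\Omega^4_1\oplus \Omega^4_7\oplus \Omega^4_{27}$ and $\Omega^4_-=\Omega^4_{35}$, the image of $S_0$ lies in $\Omega^4_{35}$, and by injectivity plus dimension count this restriction is an isomorphism $S_0 \to \Omega^4_{35}$.

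To locate the images of $\Omega^0$ and $\Omega^2_7$ inside $\Omega^4_+$, I invoke Spin(7)-equivariance: each image is a Spin(7)-invariant subspace and hence a direct sum of full irreducible summands of $\Omega^4_+=\Omega^4_1\oplus \Omega^4_7\oplus \Omega^4_{27}$. The only such subspace of dimension $1$ is $\Omega^4_1$, and the only one of dimension $7$ is $\Omega^4_7$, giving the claimed isomorphisms (one can also directly verify $g\diamond \Phi = 4\Phi$ to see the image of $\Omega^0$). A final dimension count $\dim \ker(\diamond) = 64-(1+35+7) = 21 = \dim \Omega^2_{21}$ upgrades the inclusion $\Omega^2_{21}\subseteq \ker(\diamond)$ to equality. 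The main conceptual input beyond Proposition \ref{prop:diaproperties2} is the representation-theoretic fact that $\Omega^4_1, \Omega^4_7, \Omega^4_{27}$ give the full irreducible decomposition of $\Omega^4_+$ under Spin(7); this is the one nontrivial ingredient required to force each image into the correct summand, with everything else following from the two identities already recorded in Proposition \ref{prop:diaproperties2}.
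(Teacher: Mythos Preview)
Your proof is correct. The opening step---taking $A=B$ in \eqref{diapropertieseqn2} to obtain the norm identity, then reading off $\ker(\diamond)=\Omega^2_{21}$ and injectivity on $(\Omega^2_{21})^{\perp}$---is exactly what the paper does, and in fact is \emph{all} the paper does: the printed proof stops after the dimension count and does not explicitly verify that $\Omega^0$, $S_0$, $\Omega^2_7$ land in $\Omega^4_1$, $\Omega^4_{35}$, $\Omega^4_7$ respectively, deferring that to the cited source.

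Your additional two steps (the $\overline{A}$ computation combined with \eqref{diapropertieseqn1} to separate self-dual from anti-self-dual images, and then the Schur/equivariance argument inside $\Omega^4_+$) genuinely fill this gap using only material already available in the paper (namely Proposition~\ref{prop:diaproperties2} and \eqref{eq:sd&asd}). So your argument is not a different route so much as a completion of the paper's sketch; the trade-off is that you invoke the irreducibility of the summands of $\Omega^4_+$, which the paper records in Proposition~\ref{prop:427decomp} but does not prove, whereas the paper's terse version sidesteps this by citation.
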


\begin{proof}
Using \eqref{diapropertieseqn2} with $A=B$ gives
\begin{align*}
|A\diamond \s7|^2&=84 (\tr A)^2 + 96|A_{35}|^2+384|A_7|^2    
\end{align*}
and hence $A\diamond \s7=0 \iff A=A_{21}$. If $A_{21}=0$ then since all the coefficients in $|A\diamond \s7|^2$ are positive hence the map $A\mapsto A\diamond \s7$ is injective on $(\Omega^2_{21})^{\perp}$ and by dimension count, the map is a linear isomorphism.
\end{proof}

\noindent
For a concrete description of the space $\Omega^4_{27}$ we refer the reader to \cite[pg. 4-5]{karigiannis-spin7}. The decomposition of $\Omega^4(M)$ into self-dual and anti-self-dual parts is
 \begin{align}\label{eq:sd&asd}
\Omega^4_+=\{\sigma \in \Omega^4\mid \star \sigma = \sigma\}=\Omega^4_1\oplus \Omega^4_7\oplus \Omega^4_{27},\ \ \ \ \ \Omega^4_-=\{\sigma\in \Omega^4\mid \star \sigma=-\sigma\}=\Omega^4_{35} .    
 \end{align}


\noindent

\medskip

\noindent
Before we discuss the torsion of a Spin(7)-structure, we note some contraction identities involving the $4$-form $\s7$. In local coordinates $\{x^1, \cdots, x^8\}$, the $4$-form $\s7$ is
\begin{align*}
\s7=\frac{1}{24}\s7_{ijkl}\ dx^i\wedge dx^j\wedge dx^k\wedge dx^l    
\end{align*}
where $\s7_{ijkl}$ is totally skew-symmetric. We have the following identities
\begin{align}
    \s7_{ijkl}\s7_{abkl}&=6g_{ia}g_{jb}-6g_{ib}g_{ja}-4\s7_{ijab}, \label{eq:impiden2} \\
    \s7_{ijkl}\s7_{ajkl}&=42g_{ia}, \label{eq:impiden3} \\
    \s7_{ijkl}\s7_{ijkl}&=336 . \label{eq:impiden4}
\end{align}
We also have contraction identities involving $\del \s7$ and $\s7$
\begin{align}
(\del_m\s7_{ijkl})\s7_{abkl}&=-\s7_{ijkl}(\del_m\s7_{abkl})-4\del_m\s7_{ijab} \label{eq:impiden5}\\
(\del_m\s7_{ijkl})\s7_{ajkl}&=-\s7_{ijkl}(\del_m\s7_{ajkl}) \label{eq:impiden6} \\
(\del_m\s7_{ijkl})\s7_{ijkl}&=0. \label{eq:impiden7}
\end{align}

\noindent
We now describe the \emph{torsion} of a Spin(7)-structure. Given $X\in \Gamma(TM)$, we know from \cite[Lemma 2.10]{karigiannis-spin7} that $\del_X\s7$ lies in the subbundle $\Omega^4_7\subset\Omega^4$. 
\begin{definition}
    The \emph{torsion tensor} of a Spin(7)-structure $\s7$ is the element of $\Omega^1_8\otimes \Omega^2_7$ defined by expressing $\del \s7$.
    Since $\del_X\s7\in \Omega^4_7$, by Proposition \ref{prop:diaproperties1}, $\del \s7$ can be written as
\begin{align}
\label{Tdefneqn}
    \del_m\s7_{ijkl}=(T_m\diamond \s7)_{ijkl}=T_{m;ip}\s7_{pjkl}+T_{m;jp}\s7_{ipkl}+T_{m;kp}\s7_{ijpl}+T_{m;lp}\s7_{ijkp}    
\end{align}
    where $T_{m;ab}\in\Omega^2_7$, for each fixed $m$. This defines the torsion tensor $T$ of a Spin(7)-structure, which is an element of $\Omega^1_8\otimes \Omega^2_7$.
\end{definition}

\noindent
In terms of $\del \s7$, the torsion $T$ is given by
\begin{align}
\label{eq:Texpress}
    T_{m;ab} 
    =\frac{1}{96}(\del_m\s7_{ajkl})\s7_{bjkl}    
\end{align}
since $T$ is an element of $\Omega^1_8\otimes \Omega^2_7$.
\begin{remark}
The notation $T_{m;ab}$ should not be confused with taking two covariant derivatives of $T_m$. The torsion tensor T is an element of $\Omega^1_8\otimes \Omega^2_7$ and thus, for each fixed index $m$, $T_{m;ab}\in \Omega^2_7$.
\end{remark} 
\noindent
We write $\Div T$ for the divergence of the torsion which is an element of $\Omega^2_7$ and is given by
\begin{align}\label{eq:divT}
(\Div T)_{jk}=\del_mT_{m;jk}.
\end{align}We also have the following result, originally due to Fern\'andez \cite{fernandez-spin7}.

\begin{theorem}\cite{fernandez-spin7}
The Spin(7)-structure $\s7$ is torsion free if and only if $d\s7=0$. Since $\star \s7=\s7$, this is equivalent to $d^*\s7=0$.
\end{theorem}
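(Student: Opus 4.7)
The forward direction $\del\Phi = 0 \Rightarrow d\Phi = 0$ is immediate since $d\Phi$ is the skew-symmetrization of $\del\Phi$. The secondary equivalence $d\Phi = 0 \Leftrightarrow d^*\Phi = 0$ is also immediate: on an oriented Riemannian $8$-manifold $d^* = -\star d \star$ on $4$-forms, and self-duality $\star\Phi = \Phi$ gives $d^*\Phi = -\star d\Phi$.

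The nontrivial content is the converse, that $d\Phi = 0$ forces $T = 0$. By \eqref{Tdefneqn}, $\del\Phi = T \diamond \Phi$ takes values in the rank-$56$ subbundle $\Omega^1_8 \otimes \Omega^2_7$, so it suffices to show the Spin(7)-equivariant linear map
\begin{align*}
L \colon \Omega^1_8 \otimes \Omega^2_7 \longrightarrow \Omega^5(M), \qquad T \longmapsto d(T \diamond \Phi)
\end{align*}
is injective. The plan is to combine a dimension count with Schur's lemma. Under Hodge duality $\Omega^5 \cong \Omega^3 = \Omega^3_8 \oplus \Omega^3_{48}$ is $56$-dimensional with irreducible Spin(7)-summands of dimensions $8$ and $48$; the source $\Omega^1_8 \otimes \Omega^2_7$ carries the same $8 \oplus 48$ decomposition under Spin(7). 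By Schur's lemma $L$ either vanishes or is a scalar isomorphism on each irreducible summand, and the task reduces to verifying the two scalars are nonzero.

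To do this, I would expand
\begin{align*}
(d\Phi)_{mijkl} = \del_m\Phi_{ijkl} - \del_i\Phi_{mjkl} + \del_j\Phi_{mikl} - \del_k\Phi_{mijl} + \del_l\Phi_{mijk},
\end{align*}
substitute \eqref{Tdefneqn} into each term, and extract the $\Omega^3_8$ and $\Omega^3_{48}$ projections of $\star d\Phi \in \Omega^3$ using the characterizations \eqref{eq:38decomp2}--\eqref{eq:348decomp2}. The $\Omega^3_8$ projection amounts to contracting against $\Phi$ to extract the associated vector field, and the $\Omega^3_{48}$ projection is recovered by subtraction. Applying the triple contraction identity \eqref{eq:impiden1} and the reductions \eqref{eq:impiden2}--\eqref{eq:impiden4}, combined with the $\Omega^2_7$ membership $T_{m;ab}\Phi_{abij} = -6\,T_{m;ij}$ from \eqref{eq:27decomp2}, should collapse each projection to a nonzero scalar multiple of the corresponding Spin(7)-irreducible component of $T$. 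This will force $L$ to be injective and thus $T = 0$, i.e.\ $\del\Phi = 0$.

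The main obstacle is the combinatorial bookkeeping: the five-fold antisymmetrization of $T \diamond \Phi$ produces twenty terms after substituting the four summands of $T_m \diamond \Phi$, and consolidating them via \eqref{eq:impiden1} requires careful sign tracking to reach a manifestly clean expression. If the global computation turns unwieldy, a cleaner alternative is to exhibit one explicit test element in each of the two irreducible Spin(7)-summands of $\Omega^1_8 \otimes \Omega^2_7$ and verify in a local orthonormal frame that $L$ does not annihilate it; by Schur's lemma this single calculation in each component is enough.
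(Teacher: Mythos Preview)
The paper does not actually prove this theorem: it is stated with a citation to Fern\'andez \cite{fernandez-spin7} and no proof is given. So there is no ``paper's own proof'' to compare against; you are supplying an argument where the paper simply invokes the literature.

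Your outline is the standard representation-theoretic proof and is correct in substance. A few remarks:

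\begin{itemize}
\item Your notation $L(T) = d(T\diamond\Phi)$ is mildly misleading: as written it looks like a first-order differential operator in $T$, in which case Schur's lemma would not apply directly. What you mean (and what your later expansion makes clear) is the \emph{pointwise algebraic} alternation map $\Lambda^1_8\otimes\Lambda^2_7 \to \Lambda^5$ sending $T_{m;ab}$ to the full skew-symmetrization of $(T_m\diamond\Phi)_{ijkl}$ over the five indices. That map is a $\mathrm{Spin}(7)$-module homomorphism between finite-dimensional representations, so Schur's lemma applies cleanly. It would be worth saying this explicitly.
\item The decomposition you invoke, $\mathbf{8}\otimes\mathbf{7}\cong\mathbf{8}\oplus\mathbf{48}$, is exactly what the paper establishes just below the theorem via the maps $\iota,\rho$ and Lemma~\ref{lemma:basic-tool}, so you can cite that rather than asserting it.
\item The honest work, as you say, is checking the two Schur scalars are nonzero. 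Your fallback plan (test one element in each summand) is the efficient route; the full twenty-term expansion is unnecessary once you have Schur.
\end{itemize}

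In short: the approach is sound, the paper offers nothing to compare it to, and the only thing to tighten is the distinction between the differential-looking notation and the genuinely algebraic map you are analyzing.
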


\medskip

\noindent
Finally, an important bit of information which we need about the torsion is that it satisfies a ``Bianchi-type identity''. This was first proved by Karigiannis \cite[Theorem 4.2]{karigiannis-spin7} using the diffeomorphism invariance of the torsion tensor and a different proof using the Ricci identity \eqref{eq:ricciiden} was given in \cite[Theorem 3.9]{dle-isometric}.

\begin{theorem}\label{thm:spin7bianchi}
The torsion tensor $T$ satisfies the following ``Bianchi-type'' identity
\begin{align}\label{spin7bianchi}
\del_iT_{j;ab}-\del_jT_{i;ab}=2T_{i;am}T_{j;mb}-2T_{j;am}T_{i;mb}+\frac 14R_{jiab}-\frac 18R_{jimn}\s7_{mnab}.    
\end{align}
\qed
\end{theorem}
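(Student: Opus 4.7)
The natural approach is to differentiate the defining formula \eqref{eq:Texpress} once more and antisymmetrize in the first index, using the Ricci identity \eqref{eq:ricciiden} to produce the curvature terms on the right-hand side and re-expressing leftover products of $\del\s7$ as quadratic-in-$T$ terms via \eqref{Tdefneqn}.

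Concretely, I would start from
$$T_{j;ab}=\tfrac{1}{96}\,(\del_j\s7_{apqr})\,\s7_{bpqr},$$
apply $\del_i$, and antisymmetrize in $(i,j)$ to obtain
\begin{align*}
\del_iT_{j;ab}-\del_jT_{i;ab}&=\tfrac{1}{96}\bigl[(\del_i\del_j-\del_j\del_i)\s7_{apqr}\bigr]\s7_{bpqr}\\
&\quad+\tfrac{1}{96}\bigl[(\del_j\s7_{apqr})(\del_i\s7_{bpqr})-(\del_i\s7_{apqr})(\del_j\s7_{bpqr})\bigr].
\end{align*}
The two lines are handled by separate mechanisms, and the job is to identify their contributions.

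For the first line, I would expand the commutator via the Ricci identity \eqref{eq:ricciiden} applied to the $4$-form $\s7$, producing four terms of the form $R_{ij\cdot s}\s7_{s\cdot\cdot\cdot}$. Each of these, after contraction with $\s7_{bpqr}$, is simplified by the quartic $\s7$-identities \eqref{eq:impiden2}--\eqref{eq:impiden4}, which collapse the $\s7\cdot\s7$ factors into metric terms plus a $\s7_{mnab}$ contraction. The constants $6,\,42,\,4$ appearing in those identities, combined with the $\tfrac{1}{96}$ prefactor and the first Bianchi identity for $R$, should after cancellation yield precisely the combination $\tfrac14R_{jiab}-\tfrac18R_{jimn}\s7_{mnab}$.

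For the second line, I would substitute the right-hand side of \eqref{Tdefneqn} into each factor of $\del\s7$, generating sixteen terms each quadratic in $T$ and involving a product of two copies of $\s7$. Using the contraction identities, together with the crucial fact that $T$ lies in $\Omega^2_7$ in its last two indices, so that \eqref{eq:27decomp2} gives $T_{i;pq}\s7_{pqab}=-6T_{i;ab}$, most of these sixteen terms either collapse into a single torsion or cancel pairwise once the antisymmetrization in $(i,j)$ is applied. The surviving contribution should assemble into $2T_{i;am}T_{j;mb}-2T_{j;am}T_{i;mb}$.

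The main obstacle is purely combinatorial: tracking indices and signs through several contractions while ensuring that cross-terms cancel correctly. Two shortcuts make the algebra tractable. First, since the final answer is antisymmetric in $(a,b)$ (both $R_{jiab}$ and $T_{m;ab}$ are), one may restrict at each step to the $(a,b)$-antisymmetric part and discard the symmetric remainders, which must cancel on their own. Second, applying \eqref{eq:27decomp2} as early as possible—whenever a torsion index is contracted against two adjacent indices of a $\s7$—immediately reduces a product $T\cdot\s7$ to a single torsion tensor, eliminating the bulk of the quartic-$\s7$ clutter before the final assembly.
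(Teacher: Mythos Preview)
Your approach is correct and is precisely the Ricci-identity method the paper alludes to just before stating the theorem (citing \cite[Theorem~3.9]{dle-isometric}); the paper itself gives no proof, only the reference and the alternative diffeomorphism-invariance argument of \cite[Theorem~4.2]{karigiannis-spin7}. The outline you describe---differentiate \eqref{eq:Texpress}, antisymmetrize, split into a commutator piece handled by \eqref{eq:ricciiden} plus the contraction identities \eqref{eq:impiden2}--\eqref{eq:impiden3}, and a quadratic-in-$\del\s7$ piece handled by \eqref{Tdefneqn} and \eqref{eq:27decomp2}---is exactly the computation carried out in that reference, so there is nothing to add.
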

\noindent
For fixed $j, i$, we have 
\begin{align*}
R_{jiab}= \pi_7(\Riem)_{jiab}+\pi_{21}(\Riem)_{jiab}
\end{align*}
where, using \eqref{eq:pi7} and \eqref{eq:pi21}, we have
\begin{align*}
\pi_7(\Riem)_{jiab}&=\frac 14R_{jiab}-\frac 18R_{jimn}\Phi_{mnab}\ \ \ \ \text{and}\\
\pi_{21}(\Riem)_{jiab}&=\frac 34R_{jiab}+\frac 18R_{jimn}\Phi_{mnab},
\end{align*}
and hence the Spin(7)-Binachi identity \eqref{spin7bianchi} can also be written as
\begin{align*}
\del_iT_{j;ab}-\del_jT_{i;ab}=2T_{i;am}T_{j;mb}-2T_{j;am}T_{i;mb}+\pi_7(\Riem)_{jiab}.
\end{align*}
\noindent
Using the Riemannian Bianchi identity, we see  that
\begin{align*}
R_{ijkl}\s7_{ajkl}=-(R_{jkil}+R_{kijl})\s7_{ajkl}=-R_{iljk}\s7_{aljk}-R_{ikjl}\s7_{akjl}    
\end{align*}
and hence we have the fact that 
\begin{align}\label{rmspin7}
R_{ijkl}\s7_{ajkl}=0.    
\end{align}
Using this and contracting \eqref{spin7bianchi} on $j$ and $b$ gives the expression for the Ricci curvature of a metric induced by a Spin(7)-structure. Precisely,

\begin{align}\label{ricci}
R_{ij}=4\del_iT_{a;ja}-4\del_aT_{i;ja}-8T_{i;jb}T_{a;ba}+8T_{a;jb}T_{i;ba}    
\end{align}
which also proves that the metric of a torsion free Spin(7)-structure is Ricci-flat, a result originally due to Bonan. Taking the trace of \eqref{ricci} gives the expression of the scalar curvature $R$
\begin{align}
R&=4\del_iT_{a;ia}-4\del_aT_{i;ia}+8|T_8|^2+8T_{a;jb}T_{j;ba}, \nonumber \\   
& \stackrel{\eqref{T8des}}{=} 8\Div T_8 + 8|T_8|^2+8T_{a;jb}T_{j;ba} \label{scalar1}.
\end{align}

\noindent
Since $T=T_8+T_{48}$, the $T_8$ component of torsion can be viewed as a vector field on $M$ and hence we want to look at an alternate expression for the symmetric $2$-tensor $\cL_{T_8}g$ which will be used in \textsection~\ref{sec:ifintro}. We have the following linear algebra result whose proof (and in fact, a more general statement) can be found in \cite[Lemma 4.1]{dgk-flows}.

\begin{lemma} \label{lemma:basic-tool}
Let $V$ and $W$ be finite-dimensional real vector spaces equipped with positive definite inner products.
Let $\iota \colon V \to W$ and $\rho \colon W \to V$ be linear maps. Suppose that there exist $b, c$ \emph{both nonzero}, such that for all $v \in V$ and $w \in W$, we have
\begin{equation} \label{eq:basic-condition}
\mathrm{(i)} \,\, \rho \iota v = b v \quad \text{ and } \quad \mathrm{(ii)} \,\, \langle \rho w, v \rangle = c \langle w, \iota v \rangle.
\end{equation}
Then in fact we have an isomorphism of $W$ with an \emph{orthogonal} direct sum
\begin{equation} \label{eq:basic-tool-result}
W \cong (\ker \rho) \oplus_{\perp} V.
\end{equation}
\qed
\end{lemma}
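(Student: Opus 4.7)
\medskip

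\noindent
\textbf{Proof plan.} The goal is to exhibit $W$ as an internal orthogonal direct sum of $\ker\rho$ with image subspaces $\iota(V_k)$, each canonically identified with $V_k$ via $\iota$. The strategy has two ingredients: condition (i) supplies injectivity/surjectivity together with a dimension count, while condition (ii) supplies the required orthogonalities.

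First I would exploit (i). The operator $\rho\iota\colon V\to V$ acts by the nonzero scalar $b_k$ on each orthogonal summand $V_k$, hence is a linear isomorphism of $V$. Consequently $\iota$ is injective and $\rho$ is surjective, and rank--nullity yields the dimension identity $\dim W = \dim(\ker\rho)+\dim V$.

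Next I would use (ii) to establish two orthogonality statements inside $W$. For $w\in\ker\rho$ and $v_k\in V_k$,
\begin{equation*}
\langle w,\iota v_k\rangle = c_k^{-1}\langle\rho w,v_k\rangle = 0,
\end{equation*}
so $\iota(V)\perp\ker\rho$. For $k\neq l$ with $v_k\in V_k$ and $v_l\in V_l$, combining (i) and (ii) gives
\begin{equation*}
\langle\iota v_k,\iota v_l\rangle = c_l^{-1}\langle\rho\iota v_k,v_l\rangle = b_k c_l^{-1}\langle v_k,v_l\rangle = 0,
\end{equation*}
so $\iota(V_1),\dots,\iota(V_m)$ are pairwise orthogonal in $W$.

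Putting the pieces together, the orthogonal sum $\ker\rho\oplus_\perp\iota(V_1)\oplus_\perp\cdots\oplus_\perp\iota(V_m)$ is a subspace of $W$ whose dimension equals $\dim W$, hence equals $W$. Identifying each $\iota(V_k)$ with $V_k$ through the injection $\iota$ yields the stated isomorphism. I do not foresee a real obstacle: the hypotheses $b_k,c_k\neq 0$ are engineered precisely to supply the invertibility in the first step and the vanishing inner products in the second. The only mild subtlety worth flagging is that $\iota$ need not be an isometry onto its image (the inner products get rescaled by a factor controlled by $b_k/c_k$), but the lemma only asserts a vector space isomorphism respecting the orthogonal decomposition, which is exactly what the argument delivers.
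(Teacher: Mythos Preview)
Your argument is correct and complete: condition (i) gives invertibility of $\rho\iota$ (hence injectivity of $\iota$, surjectivity of $\rho$, and the dimension count), while condition (ii) delivers both $\ker\rho\perp\iota(V)$ and the mutual orthogonality of the $\iota(V_k)$, after which the dimension equality forces the orthogonal decomposition of $W$. The paper does not actually supply its own proof of this lemma---it merely cites \cite[Lemma~4.1]{dgk-flows} and marks the statement with \qed---so there is no in-paper argument to compare against; your proof is exactly the standard one that reference would contain.
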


\noindent
Let $V=\Lambda^1_8$ and $W=\Lambda^1_8\otimes \Lambda^2_7$ in the previous lemma. Define $\iota: \Lambda^1_8\rightarrow \Lambda^1_8\otimes \Lambda^2_7$ by
\begin{align*}
 \iota(X)_{ijk}=X_kg_{ij}-X_jg_{ik}+X_p\s7_{pijk},   
\end{align*}
and $\rho: \Lambda^1_8\otimes \Lambda^2_7\rightarrow \Lambda^1_8$ by
\begin{align*}
\rho(\gamma)_j=\gamma_{i;ji}.    
\end{align*}
Clearly $\iota(X)_{ijk}=-\iota(X)_{ikj}$ and hence \emph{a priori}, $\iota(X)\in \Lambda^1_8\otimes \Lambda^2$ and we now show that in fact, $\iota(X)\in \Lambda^1_8\otimes\Lambda^2_7$. Consider
\begin{align}
 \iota(X)_{ijk}\s7_{abjk}&= \left(X_kg_{ij}-X_jg_{ik}+X_p\s7_{pijk}\right) \Phi_{abjk} \nonumber \\
 &=X_k\s7_{abik}-X_j\s7_{abji}+X_p(6g_{pa}g_{ib}-6g_{pb}g_{ia}-4\s7_{piab}) \nonumber \\
&=2X_k\s7_{abik}+6X_ag_{ib}-6X_bg_{ia}+4X_p\s7_{abip}\nonumber \\
&= 6X_ag_{ib}-6X_bg_{ia}+6X_k\s7_{abik}=-6\iota(X)_{iab}
\end{align}
where we have used \eqref{eq:impiden2} in the second equality. Hence \eqref{eq:27decomp2} implies $\iota(X)\in \Lambda^1_8\otimes \Lambda^2_7$.
We compute
\begin{align}
(\rho \iota (X))_j&= \rho(X_kg_{ij}-X_jg_{ik}+X_p\s7_{pijk})_j=-7X_j. \nonumber
\end{align}
Also,
\begin{align}
\langle \rho \gamma, X\rangle &= \gamma_{i;ji}X_j,   \nonumber 
\end{align}
and, using \eqref{eq:27decomp2},
\begin{align}
 \langle \gamma, \iota X\rangle &= \gamma_{i;jk}(X_kg_{ij}-X_jg_{ik}+X_p\s7_{pijk}) \nonumber \\
 &= \gamma_{i;ik}X_k-X_j\gamma_{i;ji}+X_p\gamma_{i;jk}\s7_{pijk} \nonumber \\
 &= -8X_j\gamma_{i;ji}. \nonumber
\end{align}
Thus, (i) and (ii) in \eqref{eq:basic-condition} are satisfied with $b_k=-7$ and $c_k=-8$ respectively. We deduce from Lemma~\ref{lemma:basic-tool} that
\begin{align}
\Lambda^1_8\otimes \Lambda^2_7 \cong (\ker \rho) \oplus_{\perp} \Lambda^1_8  
\end{align}
with $\ker \rho = \{\gamma_{i;jk}\in \Lambda^1_8\otimes \Lambda^2_7 \mid \gamma_{i;ji}=0\}$ being $48$-dimensional. In particular, since $T_{\Phi}\in \Lambda^1_8\otimes \Lambda^2_7$, we see that 
\begin{equation}\label{T8des}
(T_8)_j = T_{i;ji}    
\end{equation}
and thus
\begin{align}\label{eq:lieT8}
\left(\cL_{T_8}g\right)_{ij}= \del_i(T_8)_j+\del_j(T_8)_i= \del_i(T_{k;jk})+\del_j(T_{k:ik}).   
\end{align}

\noindent
We will also need the expression for the Lie derivative of the Spin(7)-structure in \textsection~\ref{sec:ste}. Let $W\in \Gamma(TM)$. Using \eqref{Tdefneqn} and \eqref{eq:diadefn2}, we have
\begin{align}
(\cL_W\s7)_{ijkl}&= W_p\del_p\s7_{ijkl}+ \del_iW_p\s7_{pjkl}+ \del_jW_p\s7_{ipkl}+ \del_kW_p\s7_{ijpl}+ \del_lW_p\s7_{ijkp} \nonumber \\
&= W_p(T_p\diamond \s7)_{ijkl}+(\del W\diamond \s7)_{ijkl}. \nonumber
\end{align}
We have $\del W = (\del W)_{\text{sym}}+(\del W)_{\text{skew}}= \frac 12 \cL_Wg + (\del W)_7 + (\del W)_{21}$, where $(\del W)_{7}$ and $(\del W)_{21}$ denote the projection of the $2$-form $(\del W)_{\text{skew}}$ onto the $\Omega^2_7$ and $\Omega^2_{21}$ components respectively. Using the fact that $\Omega^2_{21}\diamond \s7=0$, we get
\begin{align}
(\cL_W\Phi)_{ijkl}&=\left(\left(\frac 12 \cL_Wg +  T(W)+(\del W)_7 \right)  \diamond \s7 \right)_{ijkl}. \label{eq:liePhiexp} 
\end{align}
We record this observation in the following

\begin{proposition}\label{prop:infsymmPhi}
A vector field $W$ is an infinitesimal symmetry of $\s7$, i.e., $\cL_W\s7=0$ if and only if
\begin{align}\label{eq:infsymPhi}
 \cL_Wg=0 \ \ \ \ \ \text{and}\ \ \ T(W)=-(\del W)_7.   
\end{align}
\qed
\end{proposition}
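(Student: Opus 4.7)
The proof is a direct consequence of the expression \eqref{eq:liePhiexp} for $\mathcal{L}_W\Phi$ together with the injectivity properties of the $\diamond$ operator recorded in Proposition~\ref{prop:diaproperties1}. The plan is to read off the vanishing of $\mathcal{L}_W\Phi$ component by component with respect to the splitting \eqref{eq:splitting TM* x TM}.

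First, I would recall that by \eqref{eq:liePhiexp}, we have
\begin{equation*}
\mathcal{L}_W\Phi = \left(\tfrac{1}{2}\mathcal{L}_W g + T(W) + (\nabla W)_7\right) \diamond \Phi.
\end{equation*}
The argument of $\diamond$ lies in the subspace $\Omega^0 \oplus S_0 \oplus \Omega^2_7$ of $\Gamma(T^*M \otimes TM)$: the symmetric piece $\frac{1}{2}\mathcal{L}_W g$ decomposes into its trace (in $\Omega^0$) and its traceless part (in $S_0$), while $T(W)$ and $(\nabla W)_7$ both lie in $\Omega^2_7$ by the very definition of the torsion and the projection. Thus there is no $\Omega^2_{21}$ contribution in the argument.

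Next, I invoke Proposition~\ref{prop:diaproperties1}, which tells us that $\diamond \Phi$ is injective precisely on the orthogonal complement $\Omega^0 \oplus S_0 \oplus \Omega^2_7$ of $\Omega^2_{21}$. Therefore $\mathcal{L}_W\Phi = 0$ if and only if
\begin{equation*}
\tfrac{1}{2}\mathcal{L}_W g + T(W) + (\nabla W)_7 = 0.
\end{equation*}
Since the three summands $\Omega^0$, $S_0$, $\Omega^2_7$ are mutually orthogonal, this vanishing is equivalent to each component vanishing separately. The symmetric piece $\frac{1}{2}\mathcal{L}_W g$ lives in $\Omega^0 \oplus S_0$ and the two-form piece $T(W) + (\nabla W)_7$ lives in $\Omega^2_7$, so the two groups do not interact, giving $\mathcal{L}_W g = 0$ and $T(W) + (\nabla W)_7 = 0$ independently.

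No step should present any real obstacle; the only point that requires a moment of care is to confirm that the symmetric and skew pieces of $\nabla W$ really do land in the claimed summands and that $T(W)$, meaning $T_{i;\cdot\cdot}W^i$ viewed as a $2$-form on the last two indices, is indeed in $\Omega^2_7$ (which follows from $T \in \Omega^1_8 \otimes \Omega^2_7$). After that, the equivalence is immediate, establishing Proposition~\ref{prop:infsymmPhi}.
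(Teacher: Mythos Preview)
Your proposal is correct and follows exactly the approach the paper intends: the paper derives \eqref{eq:liePhiexp} and then states the proposition with a \qed, treating it as an immediate consequence. Your argument simply makes explicit the two ingredients implicit in that \qed, namely the injectivity of $\diamond$ on $(\Omega^2_{21})^\perp$ from Proposition~\ref{prop:diaproperties1} and the orthogonality of the symmetric and $\Omega^2_7$ pieces, so there is nothing to add.
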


\begin{remark}
Proposition~\ref{prop:infsymmPhi} and \eqref{eq:liePhiexp} is essentially \cite[Lemma 2.6]{dle-isometric} where it was proved for arbitrary $H$-structures.    
\end{remark}

\medskip

\noindent
Comparing with the case of manifolds with $\mathrm{G}_2$-structures, the most general flow of $\G2$-structures which are second order quasilinear are described in \cite{dgk-flows} and the authors prove the short-time existence and uniqueness theorem for a large class of flows. Such a family of flows was obtained by explicitly computing all
independent second order differential invariants of $\G2$-structures which are $3$-forms. Since the Riemann curvature tensor $\Riem$ and $\del T$ are the only two second order invariants of a $\G2$-structure, decomposing them into irreducible $\G2$-representations and then picking the linearly independent components (since $\Riem$ and $\del T$ are related by the so called $\G2$-Bianchi identity) which can be made into a $3$-form provides a way to write the most general flow of $\G2$-structures. See \cite[\textsection 4]{dgk-flows} for more details. The existence of a Spin(7)-structure induces a decomposition of tensor bundles into irreducible Spin(7)-representations and one can similarly decompose $\Riem$ and $\del T$ into irreducible Spin(7)-representations and look at the linearly independent components which can be made into an admissible $4$-form. We describe this briefly below without any proofs. 

\medskip

\noindent
Let us denote the $k$-dimensional irreducible Spin(7)-representation by $\mb{k}$. Recall that the space $\mathcal{K}$ of \emph{curvature tensors} on $(M, g)$ is the subspace of $S^2 (\Lambda^2) = \Gamma(\mathrm{S}^2 (\Lambda^2 T^* M))$ of elements satisfying the first Bianchi identity and hence we have the orthogonal decomposition
\begin{align*}
S^2(\Lambda^2)&= \Omega^4 \oplus_{\perp} \mathcal{K}.    
\end{align*}
Since for a manifold with a Spin(7)-structure $\Phi$, we have $\Lambda^2 = \mb{7}\oplus \mb{21}$ (see \textsection~\ref{subsec:formdecomp}), we have
\begin{align*}
S^2(\Lambda^2)=S^2(\mb{7}\oplus \mb{21})=S^2(\mb{7})\oplus \left(\mb{7}\otimes \mb{21}\right) \oplus S^2(\mb{21}).    
\end{align*}

\noindent
One can check the following decomposition of tensor bundles into irreducible Spin(7)-representations.
\begin{align}
\underbrace{\mb{7} \otimes \mb{7}}_{49} &= \mb{1}\oplus \mb{21} \oplus \mb{27} \\
\underbrace{\mb{7} \otimes \mb{21}}_{147} &= \mb{35}\oplus \mb{7} \oplus \mb{105} \\
\underbrace{\mb{21} \otimes \mb{21}}_{441} &= \mb{21}\oplus \mb{189}\oplus \mb{168} \oplus \mb{27} \oplus \mb{1} \oplus \mb{35}\\
\underbrace{S^2(21)}_{231} &= \mb{27} \oplus \mb{168} \oplus \mb{35} \oplus \mb{1}
\end{align}
and hence we have
\begin{align}
S^2(\Lambda^2) &= (\mb{1}\oplus \mb{27}) \oplus (\mb{35}\oplus \mb{7} \oplus \mb{105}) \oplus (\mb{27}\oplus \mb{168}\oplus \mb{35} \oplus \mb{1}).   
\end{align}
Since the space of curvature type tensors $\mathcal{K}$ is orthogonal to the space of $4$-forms $\Omega^4=\mb{1}\oplus \mb{7}\oplus \mb{27} \oplus \mb{35}$, we get the following decomposition
\begin{align*}
 \mathcal{K}&= \underbrace{\mb{1} \oplus \mb{35}}_{\text{Ricci}} \oplus \underbrace{\mb{27} \oplus \mb{105} \oplus \mb{168}}_{\text{Weyl}}.    
\end{align*}
Thus, in the presence of a Spin(7)-structure, the Weyl tensor further decomposes as $W=W_{27}+W_{105}+W_{168}$. Like the $\G2$-case there is again a $W_{27}$, however unlike the $\G2$-case, this $W_{27}$ cannot be used for a flow of Spin(7)-structures as the space of admissible $4$-forms is (pointwise) $43$-dimensional and is $\mb{1}\oplus \mb{7}\oplus \mb{35}$. Consequently, the only contribution from $\Riem$ for a flow of Spin(7)-structures are the traceless Ricci curvature $\Ric_{\circ}$ and the scalar curvature $Rg$.

\medskip

\noindent
In a similar way we look at the possible contributions from $\del T$ for a flow of Spin(7)-structures. Since $T=T_{8}+T_{48}$, hence at every point it lies in the representation $\mb{8}\oplus \mb{48}$. Thus, 
\begin{align*}
\del T&= \del T_8 + \del T_{48} \\ 
& \in (\mb{8}\otimes \mb{8}) \oplus (\mb{8}\otimes \mb{48})\\
&=(\mb{1}\oplus \mb{7} \oplus \mb{21}\oplus \mb{35}) \oplus (\mb{35}\oplus \mb{27}\oplus\mb{189}\oplus\mb{21}\oplus\mb{105}\oplus\mb{7})
\end{align*}
and hence the components which can possibly contribute to a flow of Spin(7)-structures are $\mb{1}, \mb{7}$ and $\mb{35}$. The $\mb{1}$ component from $T$ is the $\Div T_8$ term which can be written in terms of the scalar curvature $R$ and lower order terms as demonstrated in \eqref{scalar1}. Similarly, for the two $\mb{35}$ components of $\del T$ we notice that the Spin(7)-Bianchi identity \eqref{spin7bianchi} is an element of
\begin{align*}
\Lambda^2\otimes \Lambda^2_7 = (\mb{7}\oplus \mb{21})\otimes \mb{7}= (\mb{1}\oplus \mb{21}\oplus \mb{27})\oplus (\mb{35}\oplus \mb{7}\oplus \mb{105})
\end{align*}and so there is one relation between the $\mb{35}$ components of $\Riem$ and $\del T$ which means that only one of the $\mb{35}$s appearing in the decomposition of $\del T$ is independent. Thus, the two $\mb{35}$s appear as a combination from $\cL_{T_8}g$ and $\Ric$. Similarly there is one relation between $\mb{7}$ components of $\Riem$ and $\del T$ and hence only one contribution for the $\mb{7}$ component from $\del T$ is independent which is the $\Div T$ term (since in the Spin(7)-case $\Div T=-\Div T^t$ where $(\Div T^t)_{ij}=\del_mT_{m;ji}$). One can do an explicit description of all the components above from $\Riem$ and $\del T$ as in \cite{dgk-flows}. However, since we are only interested in flows of Spin(7)-structures for which we have all the relevant components, namely, $\Ric, \cL_{T_8}g$ and $\Div T$, we do not pursue the former here and we are currently doing this in collaboration with R. Singhal. 
    

\section{A gradient flow of Spin(7)-structures}
\label{sec: Basics of the Spin(7)-flow}

\subsection{Derivation of the flow}
\label{sec:ifintro}
In this section we define the flow of Spin(7)-structures studied in this paper and prove that it is a negative gradient flow.


\begin{definition}
Let $M^8$ be a compact manifold. The \emph{energy functional} $E$ on the set of Spin(7)-structures on $M$ is
\begin{align}
\label{energyfuncdefn}
    E(\s7)=\frac 12 \int_M |T_{\s7}|^2 \vol_{\s7}     
\end{align}
where $T_{\s7}$ is the torsion of $\s7$ and the norm and the volume form are with respect to the metric induced by $\s7$.
\end{definition}

\noindent
The natural question here is whether, given a Spin(7)-structure $\s7_0$, there is a  ``best'' Spin(7)-structure. An obvious way to study this question is to consider the negative gradient flow of $E$. 

\medskip

\noindent 
The most general flow of Spin(7)-structures \cite{karigiannis-spin7} is given by
\begin{align}\label{gflowdefn}
\frac{\pt \s7}{\pt t}=A\diamond \s7 = (h+X)\diamond \s7   
\end{align}
where $A_{ij}=h_{ij}+X_{ij}$ with $h_{ij}\in S^2$ and $X_{ij}\in \Omega^2_7$. Thus, the most general flow of Spin(7)-structures is given by a time-dependent family of symmetric $2$-tensors and a time-dependent family of $2$-forms lying in the subspace $\Omega^2_7$ with respect to $\Phi(t)$. In this case the evolution of the metric, the inverse of the metric and the volume form are given by 
\begin{align}\label{gflowmetricevol}
\frac{\pt g}{\pt t}=2h_{ij},\ \ \ \ \frac{\pt g^{-1}_{ij}}{\pt t}=-2h_{ij},\ \ \ \ \frac{\pt \vol}{\pt t}=\tr h \vol.    
\end{align}
We start by considering the variation of the torsion $T$ with respect to variation of the Spin(7)-structures.

\begin{proposition}\cite[Thm. 3.4]{karigiannis-spin7}\label{prop:1var}
Let $(\s7_t)_{t\in (-\varepsilon, \varepsilon)}$ be a smooth family of Spin(7)-structures. By \eqref{gflowdefn}, we can write $\left.\frac{\pt}{\pt t}\right|_{t=0}\s7_t=A\diamond \s7 = (h+X)\diamond \Phi$ for some $h\in \Gamma(S^2T^*M)$, $X\in \Omega^2_7$. If $T_t$ is the torsion of $\s7_t$ then
\begin{align}\label{1vareqn}
\left.\frac{\pt}{\pt t}\right|_{t=0}(T_{t})_{m;ab}=(h_{ap}T_{m;pb}-h_{bp}T_{m;pa})+\pi_7(\del_bh_{am}-\del_ah_{bm})+X_{ap}T_{m;pb}-X_{bp}T_{m;pa}+\pi_7(\del_mX_{ab}).  
\end{align}
\end{proposition}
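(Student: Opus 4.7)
The strategy is to differentiate the contraction formula \eqref{eq:Texpress}, $T_{m;ab} = \tfrac{1}{96}(\del_m \s7_{ajkl}) \s7_{bjkl}$, in $t$ at $t = 0$. This reduces the computation to the two pieces $\pt_t(\del_m \s7_{ajkl})$ and $\pt_t \s7_{bjkl}$, followed by contractions against $\s7$ using the algebraic identities \eqref{eq:impiden1}--\eqref{eq:impiden7}.

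The factor $\pt_t \s7 = (h + X) \diamond \s7$ is immediate from the hypothesis and unpacks via \eqref{eq:diadefn2}. For $\pt_t(\del_m \s7)$ I will use the standard identity $\pt_t(\del_m S) = \del_m(\pt_t S) - \sum (\pt_t \Gamma) \cdot S$, which is nontrivial here because the metric is itself evolving. By \eqref{gflowmetricevol} the skew-symmetric part $X$ does not enter the metric evolution, so $\pt_t g_{ij} = 2 h_{ij}$ and Koszul's formula gives $\pt_t \Gamma^p_{ij} = \del_i h^p_j + \del_j h^p_i - \del^p h_{ij}$. Substituting this into the product rule, together with $\pt_t \s7 = (h + X)\diamond \s7$, and using the defining equation $\del_m \s7 = T_m \diamond \s7$ to rewrite every occurrence of $\del \s7$, expresses $\pt_t(\del_m \s7)_{ajkl}$ as an explicit sum of $\del h$, $\del X$, and $h \cdot T \cdot \s7$, $X \cdot T \cdot \s7$ terms.

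The heart of the proof is then the contraction of this sum against $\s7_{bjkl}$, where I plan to apply \eqref{eq:impiden3} and \eqref{eq:impiden2} repeatedly. The algebraic contributions from \eqref{eq:impiden3} combine with the second term $(\del_m \s7_{ajkl})\pt_t \s7_{bjkl}$ of the differentiated \eqref{eq:Texpress} to produce the commutator-type terms $h_{ap}T_{m;pb} - h_{bp}T_{m;pa}$ and $X_{ap}T_{m;pb} - X_{bp}T_{m;pa}$. The $\s7_{ijab}$ correction produced by \eqref{eq:impiden2}, combined with the skew-symmetric derivative contributions $\del_b h_{am} - \del_a h_{bm}$ and $\del_m X_{ab}$, then assembles via \eqref{eq:pi7} into the stated $\pi_7$ projections. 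The appearance of $\pi_7$ is in fact forced by the general principle that $T_t \in \Omega^1_8 \otimes \Omega^2_7$ for every $t$, so $\pt_t T$ must lie in this same bundle in its $(ab)$-indices.

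The main obstacle will be the bookkeeping. The Christoffel variation, the diamond expansion of $(h + X) \diamond \s7$, and the substitution $\del \s7 = T \diamond \s7$ each produce several index contractions, and I must verify that the stray $T \cdot T$-type terms from the Christoffel contribution cancel out and that the remaining derivative terms combine with the correct coefficient $\tfrac{1}{96}$ into $\pi_7$ projections. Proposition~\ref{prop:diaproperties1}, which states that the diamond map restricts to an isomorphism $\Omega^2_7 \to \Omega^4_7$, serves as a consistency check: once the right-hand side of the differentiated equation is confirmed to lie in $\Omega^4_7$, the contraction formula \eqref{eq:Texpress} defining $T$ recovers $\pt_t T$ unambiguously.
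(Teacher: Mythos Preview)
The paper does not supply its own proof of this proposition; it is quoted directly from \cite[Thm.~3.4]{karigiannis-spin7}. Your plan---differentiate the contraction formula \eqref{eq:Texpress}, expand $\partial_t(\nabla_m\Phi)$ as $\nabla_m(\partial_t\Phi)$ minus the Christoffel-variation terms, substitute $\nabla_m\Phi = T_m\diamond\Phi$, and contract against $\Phi_{bjkl}$ using \eqref{eq:impiden1}--\eqref{eq:impiden3}---is exactly the standard route used in Karigiannis's original argument, so the overall approach is correct and matches the cited source.

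Two corrections to your outline. First, the sentence ``This reduces the computation to the two pieces $\partial_t(\nabla_m\Phi_{ajkl})$ and $\partial_t\Phi_{bjkl}$'' omits a third piece: the three implicit inverse-metric contractions on $j,k,l$ in \eqref{eq:Texpress} each contribute a $-2h$ term when differentiated (cf.~\eqref{gflowmetricevol}), and these are needed to obtain the correct coefficients on the $h\cdot T$ commutator terms. Second, there are no ``stray $T\cdot T$-type terms from the Christoffel contribution'' to cancel: the Christoffel variation $\partial_t\Gamma\sim\nabla h$ acts on $\Phi$ itself, producing only $(\nabla h)\cdot\Phi$ terms, and the remaining contributions from $\nabla_m((h+X)\diamond\Phi)$ and $(\nabla_m\Phi)\cdot\partial_t\Phi$ are all linear in $T$. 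With those two adjustments the bookkeeping goes through as you describe, and the $\Phi_{ijab}$ corrections from \eqref{eq:impiden2} assemble into the $\pi_7$ projections via \eqref{eq:pi7} exactly as you anticipate.
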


\medskip

\noindent
Let $E$ be the energy functional from \eqref{energyfuncdefn}. The following result motivates our formulation of the flow, cf. Definition \ref{gflowdefn}.


\begin{lemma}\label{lemma:Energyvar}
Let $(\s7_t)_{t\in (-\varepsilon, \varepsilon)}$ be a smooth family of Spin(7)-structures and $\left.\frac{\pt}{\pt t}\right|_{t=0}\s7_t=(h+X)\diamond_t \s7_t$ for some $h\in \Gamma(S^2T^*M)$, $X\in \Omega^2_7$. The gradient of the energy functional $E(\Phi_t)$ from \eqref{energyfuncdefn} is given by  
\begin{align}\label{eq:energyvareq}
\left.\frac{d}{dt}\right|_{t=0} E(\s7_t)&= \int h_{am}\left(\frac 12 R_{am}-(\cL_{T_8}g)_{am}-4T_{b;al}T_{m;lb}+4T_{m;al}T_{b;lb}-T_{a;lb}T_{m;lb} \right.\nonumber\\
& \qquad \qquad \quad \left.+\frac 12|T|^2g_{am} \right)\vol 
 - \int X_{ab} \Div T_{ab}\vol.   
\end{align}
\end{lemma}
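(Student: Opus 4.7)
The plan is to differentiate $E(\Phi_t)=\tfrac12\int_M g^{mm'}g^{aa'}g^{bb'}T_{m;ab}T_{m';a'b'}\vol$ at $t=0$ under the integral sign. Three contributions arise: \textbf{(i)} from $\dot T_{m;ab}$, given explicitly by Proposition~\ref{prop:1var}; \textbf{(ii)} from the inverse-metric variations $\dot g^{ij}=-2h^{ij}$ acting on each of the three upper indices used to form $|T|^2$; and \textbf{(iii)} from the volume-form variation $\dot\vol=(\tr h)\vol$. Since $h\in\Gamma(S^2T^*M)$ and $X\in\Omega^2_7$ lie in orthogonal summands of \eqref{eq:splitting TM* x TM}, the resulting gradient splits cleanly into an $h$-coefficient and an $X$-coefficient, which I will compute separately.

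For the $X$-piece, the algebraic parts $X_{ap}T_{m;pb}-X_{bp}T_{m;pa}$ in \eqref{1vareqn}, paired against $T_{m;ab}$, can be manipulated using the antisymmetry of $T_{m;\cdot\cdot}$ and of $X$ to give cubic-in-$T$ expressions that cancel. For the first-order piece $\pi_7(\del_m X_{ab})$, I use the key observation that, for each fixed $m$, $T_{m;\cdot\cdot}$ already lies in $\Omega^2_7$, so the projection $\pi_7$ is transparent in the pairing with $T_{m;ab}$; integrating by parts in $\del_m$ then yields exactly $-\int X_{ab}(\Div T)_{ab}\vol$, matching the second integral in~\eqref{eq:energyvareq}.

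For the $h$-piece, the algebraic terms in \eqref{1vareqn} combined with the metric-raising and volume variations (ii), (iii) reduce, by repeated use of the antisymmetry $T_{m;ab}=-T_{m;ba}$, to the quadratic expression $-4T_{b;al}T_{m;lb}+4T_{m;al}T_{b;lb}-T_{a;lb}T_{m;lb}+\tfrac12|T|^2 g_{am}$ as the coefficient of $h_{am}$. The main obstacle is extracting the $\tfrac12 R_{am}-(\cL_{T_8}g)_{am}$ piece from the first-order term $\pi_7(\del_b h_{am}-\del_a h_{bm})$. Dropping $\pi_7$ as above and integrating by parts to transfer the derivatives onto $T$ produces a linear combination of terms of the form $\del_a T_{m;b\cdot}$ and $\del_m T_{a;b\cdot}$ contracted with $h_{am}$. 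The symmetric part in $(a,m)$ is recognized as $(\cL_{T_8}g)_{am}$ via the identification $(T_8)_j=T_{i;ji}$ from~\eqref{T8des} and formula~\eqref{eq:lieT8}; the remaining combination matches the Ricci expression \eqref{ricci}, which is itself a direct consequence of the Spin(7)-Bianchi identity (Theorem~\ref{thm:spin7bianchi}).

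The final bookkeeping step is to check that the quadratic-in-$T$ remainders produced when substituting \eqref{ricci} combine coherently with the algebraic $h$-terms collected above to produce exactly the stated coefficient of $h_{am}$. Since $h$ is symmetric, only the symmetric part of its coefficient contributes, and symmetrizing produces the claimed expression $\tfrac12 R_{am}-(\cL_{T_8}g)_{am}-4T_{b;al}T_{m;lb}+4T_{m;al}T_{b;lb}-T_{a;lb}T_{m;lb}+\tfrac12|T|^2g_{am}$ in \eqref{eq:energyvareq}. The chief delicacy throughout is tracking the numerical constants through the three sources (i)--(iii) together with the two integrations by parts, which is where an error would be easiest to make.
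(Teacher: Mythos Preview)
Your plan coincides with the paper's proof: differentiate under the integral, exploit $T_{m;ab}\in\Omega^2_7$ to drop $\pi_7$ in the pairings, integrate by parts, and invoke the Spin(7)-Bianchi identity (equivalently the Ricci expression~\eqref{ricci}) to produce the $\tfrac12 R_{am}$ and $-(\cL_{T_8}g)_{am}$ terms. One small bookkeeping correction: the algebraic $h$-terms from~\eqref{1vareqn} together with the inverse-metric and volume variations contribute only $-T_{a;lb}T_{m;lb}+\tfrac12|T|^2g_{am}$ to the coefficient of $h_{am}$ (the term $2h_{ap}T_{m;pb}T_{m;ab}$ from $\dot T$ cancels against two of the three inverse-metric pieces), while $-4T_{b;al}T_{m;lb}+4T_{m;al}T_{b;lb}$ appear only as the quadratic remainder coming from Bianchi/\eqref{ricci}, exactly as your final paragraph already anticipates.
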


\begin{proof}
We use Proposition \ref{prop:1var} and \eqref{gflowmetricevol} to compute
\begin{align}
\left.\frac{d}{dt}\right|_{t=0} E(\s7_t)&=  \left.\frac{d}{dt}\right|_{t=0}\frac 12\int_M(T_t)_{m;ab}(T_t)_{n;cd}\ g^{mn}g^{ac}g^{bd}\vol \nonumber \\
&=\int_M T_{m;ab}\pt_tT_{m;ab}\vol \nonumber \\
& \quad -\int_M(h_{mn}T_{m;ab}T_{n;ab}+h_{ac}T_{m;ab}T_{m;cb}+h_{bd}T_{m;ab}T_{m;ad})\vol \nonumber \\
& \quad + \frac 12 \int_M |T|^2 \tr h \vol. \label{eq:torvar1}
\end{align}    

\noindent
We calculate the first term on the right hand side of \eqref{eq:torvar1}. Using \eqref{eq:pi7} for the $\pi_7$ component and integration by parts, we have
\begin{align*}
 \int T_{m;ab}\pt_tT_{m;ab}\vol &= \int T_{m;ab}[(h_{ap}T_{m;pb}-h_{bp}T_{m;pa})+\pi_7(\del_bh_{am}-\del_ah_{bm})]\vol \\
 & \quad + \int T_{m;ab}[X_{ap}T_{m;pb}-X_{bp}T_{m;pa}+\pi_7(\del_mX_{ab})]\vol \\
 &=\int [2h_{ap}T_{m;pb}T_{m;ab}+2\pi_7(\del_bh_{am})T_{m;ab}]\vol \\
 & \quad + \int [2X_{ap}T_{m;pb}T_{m;ab}+ \pi_7(\del_mX_{ab})T_{m;ab}]\vol \\
 &= \int \left[2h_{ap}T_{m;pb}T_{m;ab}+\left(\frac 12 \del_bh_{am}-\frac 14\del_ih_{jm}\s7_{ijba}\right)T_{m;ab}\right ]\vol \\
 & \quad + \int\left(\frac 14 \del_mX_{ab}T_{m;ab}-\frac 18\del_mX_{ij}\s7_{ijab}T_{m;ab}\right)\vol \\
 \intertext{where we have used the fact that $T_{m;pb}T_{m;ab}$ is symmetric in $a,b$ while $X_{ab}$ is skew-symmetric,}
 &= \int \left(2h_{ap}T_{m;pb}T_{m;ab}+\frac 12 \del_bh_{am}T_{m;ab} - \frac 32 \del_ih_{jm}T_{m;ij}\right)\vol \\
 & \quad + \int \del_mX_{ab}T_{m;ab}\vol \\
 \intertext{which we obtained by using the fact that $T_{m;ab}\in \Omega^1\otimes \Omega^2_7$ and \eqref{eq:27decomp2},}
 &= \int \left(2h_{ap}T_{m;pb}T_{m;ab}+2 \del_bh_{am}T_{m;ab} \right)\vol \\
 & \quad - \int X_{ab} \Div T_{ab}\vol, 
\end{align*}
where we used integration by parts and $(\Div T)_{ab}=\del_mT_{m;ab}$.
 Integrating by parts and then using the Spin(7)-Bianchi identity \eqref{spin7bianchi} gives
\begin{align*}
2\int \del_bh_{am}T_{m;ab}\vol &= -2\int h_{am}\del_bT_{m;ab} \vol \\ \nonumber
&=-2\int h_{am}\left(\del_mT_{b;ab}+2T_{b;al}T_{m;lb}-2T_{m;al}T_{b;lb}+\frac 14R_{mbab}-\frac 18R_{mbkl}\s7_{klab} \right)\vol \nonumber \\
&=-2\int h_{am}\left(-\frac 14 R_{am}+\del_mT_{b;ab} +2T_{b;al}T_{m;lb}-2T_{m;al}T_{b;lb}\right) \vol,
\end{align*}
from which we infer
\begin{align}\label{Evaraux1}
\int T_{m;ab}\pt_tT_{m;ab}\vol &= \int h_{am}\left(\frac 12 R_{am}-2\del_mT_{b;ab}-4T_{b;al}T_{m;lb}+4T_{m;al}T_{b;lb}+2T_{l;mb}T_{l;ab} \right) \vol  \nonumber \\
 & \quad - \int X_{ab} \Div T_{ab}\vol.   
\end{align}

\noindent
Using \eqref{Evaraux1} in \eqref{eq:torvar1} and the expression of $\cL_{T_8}g$ from \eqref{eq:lieT8}, we get
\begin{align}
\left.\frac{d}{dt}\right|_{t=0} E(\s7_t)&= \int h_{am}\left(\frac 12 R_{am}-2\del_mT_{b;ab}-4T_{b;al}T_{m;lb}+4T_{m;al}T_{b;lb}+2T_{l;mb}T_{l;ab} \right. \nonumber \\
& \qquad \qquad \quad \left. -T_{m;lb}T_{a;lb} -T_{l;ab}T_{l;mb}-T_{l;ba}T_{l;bm}+\frac 12|T|^2g_{am} \right)\vol \nonumber \\
& \quad - \int X_{ab} \Div T_{ab}\vol \nonumber \\
&= \int h_{am}\left(\frac 12 R_{am}-(\cL_{T_8}g)_{am}-4T_{b;al}T_{m;lb}+4T_{m;al}T_{b;lb}-T_{a;lb}T_{m;lb} \right.\nonumber\\
& \qquad \qquad \quad \left.+\frac 12|T|^2g_{am} \right)\vol 
 - \int X_{ab} \Div T_{ab}\vol, \nonumber \\
\end{align}
which completes the proof.
\end{proof}

\begin{remark}
A similar calculation for $H$-structures is done in \cite[Prop. 1.44]{FLMS}. The terms coming from $\Phi$ and $g$ are explicit for us as we are looking at a particular $H$-structure.
\end{remark}

\noindent
We are interested in the negative gradient flow of $E(\s7_t)$ and based on the above computations, we propose the following flow of Spin(7)-structures which is the flow of $-2\text{grad}(E(t))$.

\begin{definition}\label{def:ene}
Let $(M^8, \s7_0)$ be a compact manifold with a Spin(7)-structure. Let $(T*T)_{ij}=8T_{b;il}T_{j;lb}-8T_{j;il}T_{b;lb}+2T_{i;lb}T_{j;lb}$. A gradient flow of Spin(7)-structures is the  following initial value problem
\begin{align} 
\label{gfloweqn} 
 \left\{\begin{array}{rl} 
      & \dfrac{\pt \s7}{\pt t} = \left(-\Ric+2(\cL_{T_8}g) + T*T-|T|^2g + 2 \Div T\right) \diamond \s7, \\
      & \s7(0) =\s7_0.
      \tag{GF}
   \end{array}\right.
\end{align}
\end{definition}

\medskip

\noindent
It follows from \eqref{gflowmetricevol}, \eqref{scalar1} and \eqref{T8des} that along \eqref{gfloweqn}, the underlying metric and volume form evolve as
\begin{align}
\pt_tg_{ij}&=-2R_{ij}+4(\cL_{T_8}g)_{ij}+8T_{b;il}T_{j;lb}+8T_{b;jl}T_{i;lb}-8T_{j;il}T_{b;lb}-8T_{i;jl}T_{b;lb}+4T_{i;lb}T_{j;lb}-2|T|^2g_{ij}, \label{gevol} \\
\pt_t(g^{-1})_{ij}&= 2R_{ij}-4(\cL_{T_8}g)_{ij}-8T_{b;il}T_{j;lb}-8T_{b;jl}T_{i;lb}+8T_{j;il}T_{b;lb}+8T_{i;jl}T_{b;lb}-4T_{i;lb}T_{j;lb}+2|T|^2g_{ij}, \label{ginvevol}\\
\pt_t\vol&=-(4\Div T_8+6|T|^2)\vol. \label{volevol}
\end{align}

\begin{remark}
The negative gradient flow in \eqref{gfloweqn} is described (upto the highest order) in terms of the Ricci curvature of the underlying metric, the Lie derivative of the metric in the direction of $T_8$ and $\Div T$. Since $(M^8, \Phi)$ has a spin structure, a similar expression for the negative gradient flow of the spinoral energy functional is obtained in \cite[Prop. 4.14]{AWW} in terms of the covariant derivative of the spinor and $\Div T$.
\end{remark}

\noindent
We discuss the effect of scaling on tensors induced from a Spin(7)-structure. It follows from \cite[\textsection 4.3.1]{dle-isometric} that if $\s7$ is a Spin(7)-structure then so is $\wtd{\s7}=c^4\s7$, $c>0$ constant. In this case, $\wtd{g}=c^2g,\ \wtd{g^{-1}}=c^{-2}g^{-1}$ and $\wtd{\vol}=c^8\vol$. Moreover,
\begin{align*}
\wtd{\del}=\del,\ \ \wtd{T}=c^2T,\ \text{and} \ \wtd{\Ric}=\Ric.    
\end{align*}
The natural parabolic rescaling for a geometric evolution equation involves scaling the time variable $t$ by $c^2t$, when the space variable scales by $c$ and hence, keeping in mind that taking $\diamond$ with $\s7$ involves contraction on one index, we see that each term on the right hand side of \eqref{gfloweqn} indeed has the correct scaling. We record for future reference that for $\wtd{\s7}=c^4\s7$
\begin{align}
|\wtd{\del}^j \wtd{\Riem}|_{\wtd{g}}= c^{-(2+j)}|\del^j \Riem|_g,\ \ \ |\wtd{\del}^j \wtd{T}|_{\wtd{g}}= c^{-(1+j)}|\del^j T|_g.  
\end{align}
In particular, $E(c^4\s7)=c^6E(\s7)$ is a positively homogeneous functional and hence an application of Euler's theorem for positively homogeneous functional shows that the critical points of $E$ are torsion-free Spin(7)-structures, which in fact, are absolute minimizers and thus the flow \eqref{gfloweqn} is capable of detecting torsion-free Spin(7) structures.


\medskip

\section{Short-time existence and uniqueness}\label{sec:ste}
In this section we establish short-time existence and uniqueness of the flow \eqref{gfloweqn} of Spin(7)-structures,
using a modification of the DeTurck’s trick and the explicit computation of the principal symbols of the second order linear differential operators defining the flow. We first calculate the principal symbols of the highest order terms on the right hand side of \eqref{gfloweqn}, i.e., $\Ric, \cL_{T_8}g$ and $\Div T$ in \textsection~\ref{subsec:psymb}. We use these to show that \eqref{gfloweqn} is a \emph{weakly parabolic} PDE and the failure of parabolicity is only due to the diffeomorphism invariance of the tensors involved. We use a modification of the DeTurck's trick from the Ricci flow to prove the short-time existence and uniqueness of solutions in \textsection~\ref{subsec:ste}.

\subsection{Differential operators, ellipticity, and parabolicity} \label{sec:diff-ops}

We give a brief review of parabolic PDEs and the existence and uniqueness of solutions of such equations. Other sources for the discussion below are~\cite[\textsection 3.2]{Chow-Knopf},~\cite[\textsection 5.1]{AH}, ~\cite[\textsection 4]{Topping} and \cite[\textsection 6.1]{dgk-flows} (for the $\mathrm{G_2}$ case).

\medskip

\noindent
Let $E, F$ be vector bundles over a Riemannian manifold $(M,g)$ and let $L \colon \Gamma(E) \to \Gamma(F)$ be a linear differential operator of order $m$. We write $L$, for every $x \in M$, in terms of local frames for $E$ and $M$, as
\begin{equation}
L(\sigma)^b (x) = \sum_{l=0}^m [\hat L_l(x)]_a^{b,i_1,\ldots,i_l} [\nabla^{l}_{i_1,\ldots,i_l} \sigma(x)]^a = \sum_{l=0}^m [\hat L_l(x)]^b (\nabla^l \sigma(x))
\end{equation}
where for each $l =0, 1, \ldots, m$, we write $\nabla^l \sigma \in \Gamma((T^*M)^{\otimes l} \otimes E)$ to denote the $l$-th covariant derivative of $\sigma$, and $\hat L_l \in \Gamma( (TM)^{\otimes l} \otimes \mathrm{Hom}(E,F))$. Here the index $a$ corresponds to a local frame for $E$ and the index $b$ corresponds to a local frame for $F$.

\medskip

\noindent
For any such linear differential operator, we define its principal symbol so that for each $x \in M$ and $\xi \in T^*_x M$, the map
$$ \sigma_{\xi} (L) \colon E_x \to F_x $$
is the linear homomorphism
\begin{equation}
\begin{aligned}
[\sigma_{\xi}(L) (\sigma)]^b & = [\hat L_m (x)]^b (\xi, \ldots, \xi, \sigma), \\
& = [\hat L_m(x)]^{b, i_1, \ldots, i_m}_a \xi_{i_1} \cdots \xi_{i_m} \sigma^a.
\end{aligned}
\end{equation}

\noindent
The principal symbol satisfies the fundamental properties
$$ \sigma_{\xi} (P + Q) = \sigma_{\xi} (P) + \sigma_{\xi} (Q), \qquad \sigma_{\xi} (P \circ Q) = \sigma_{\xi} (P) \circ \sigma_{\xi} (Q), $$
whenever $P$, $Q$ are linear differential operators so that either $P + Q$ or $P \circ Q$ is well defined. We have the following

\begin{definition} \label{def:ellipticity}
A linear differential operator $L \colon \Gamma(E) \to \Gamma(F)$ is called \emph{elliptic} if for any $x \in M$, $\xi \in T^*_x M$, $\xi \neq 0$, the principal symbol $\sigma_{\xi} (L) \colon E_x \to F_x$ is a linear isomorphism.

\noindent
Let $E$ be a vector bundle over $M$ with a fibre metric $\langle \cdot, \cdot \rangle$. Consider a second order linear differential operator $L \colon \Gamma (E) \to \Gamma(E)$. If there is a constant $c > 0$ such that for any $\xi \in T^*_x M$, $\xi \neq 0$ and $v \in E_x$, we have
$$ \langle \sigma_{\xi}(L) (v), v \rangle \geq c |\xi|^2 |v|^2, $$
then $L$ is called \emph{strongly elliptic}.
\end{definition}

\begin{definition} \label{defn:linearization}
Let $E$, $F$ be vector bundles over $M$, let $\mathcal U \subseteq \Gamma(E)$ be open, and let $P \colon \mathcal U \to \Gamma(F)$ be a nonlinear differential operator. The operator $P$ is called elliptic at $v \in \mathcal U$ if the linearization 
\begin{align*}
D_v P & \colon \Gamma(E) \to \Gamma(F), \\
(D_v P) (w) & := \rest{\frac{d}{ds}}{s=0} P(v + sw),
\end{align*}
is an elliptic linear differential operator. 

\noindent
Similarly, if $P \colon \mathcal U \to \Gamma(E)$ is a second order differential operator and $E$ is endowed with a bundle metric $\langle \cdot, \cdot \rangle$, we say that $P$ is strongly elliptic at $\sigma \in \mathcal U$ if its linearization $D_{\sigma} P \colon \Gamma(E) \to \Gamma(E)$ is a strongly elliptic linear differential operator.

\noindent
A nonlinear evolution equation of the form $\frac{\partial}{\partial t} \sigma = P(\sigma)$, where $\sigma \in \mathcal U$, is called \emph{parabolic} at $\sigma$ if $P$ is strongly elliptic at $\sigma$.
\end{definition}

\noindent
The importance of the above definition is due to the following standard result.

\begin{theorem}\label{thm:stdell}
Let $M$ be a Riemannian manifold, let $E$ be a vector bundle over $M$ endowed with a fibre metric $\langle \cdot, \cdot \rangle$, and let $\mathcal U \subseteq \Gamma(E)$ be open. Let $P \colon \mathcal U \to \Gamma(E)$ be a second order quasilinear differential operator, which is strongly elliptic at $\sigma_0 \in \mathcal U$. Then there exists $\epsilon > 0$ and for any $ t\in [0,\epsilon)$ a unique $\sigma(t) \in \mathcal U$, such that
\begin{equation} \label{eq:paraboli_IVP}
\frac{\partial \sigma(t)}{\partial t} = P(\sigma(t)), \qquad \sigma(0) = \sigma_0.
\end{equation}
That is, a nonlinear evolution equation $\frac{\partial}{\partial t} \sigma = P(\sigma)$ which is parabolic at $\sigma_0$ has a unique short time smooth solution with initial condition $\sigma(0) = \sigma_0$.
\qed
\end{theorem}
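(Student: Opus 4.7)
The plan is to reduce the quasilinear statement to the standard linear parabolic theory on a vector bundle via a fixed-point argument, following the pattern of Hamilton and Eells--Sampson. Throughout I assume $M$ is compact (which is the only case the rest of the paper requires); the non-compact case requires weighted spaces or coercivity assumptions beyond the hypotheses. The workspace is the scale of parabolic H\"older sections $C^{2+\alpha,\, 1+\alpha/2}(M\times[0,\varepsilon], E)$, defined using the metric $g$ on $M$ and a covariant derivative on $E$; these can be built patchwise from local trivializations and glued via a partition of unity.

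First, I would exploit openness of strong ellipticity. Since $P$ is quasilinear, $D_\sigma P$ depends continuously (in the operator norm at the symbol level) on the $2$-jet of $\sigma$. Strong ellipticity at $\sigma_0$ is a pointwise inequality $\langle \sigma_\xi(D_{\sigma_0}P)v, v\rangle \geq c|\xi|^2|v|^2$; compactness of $M$ and the unit cosphere bundle together with continuity give a $C^1$-neighborhood $\cV\subseteq\cU$ of $\sigma_0$ and a constant $c'>0$ such that $D_\sigma P$ is strongly elliptic with constant at least $c'$ for all $\sigma\in\cV$. Fix $L := D_{\sigma_0}P$ and write
\begin{equation}
P(\sigma) = P(\sigma_0) + L(\sigma-\sigma_0) + N(\sigma),
\end{equation}
where $N(\sigma)$ is the Taylor remainder; by quasilinearity, the top-order part of $N$ is bilinear in $(\sigma-\sigma_0)$ and $\del^2\sigma$, so there is a constant $C(\cV)$ with $\|N(\sigma_1)-N(\sigma_2)\|_{C^{0,\alpha/2}} \leq C\,\|\sigma_1-\sigma_2\|_{C^{2+\alpha,1+\alpha/2}}\big(\|\sigma_1-\sigma_0\|_{C^{2+\alpha,1+\alpha/2}}+\|\sigma_2-\sigma_0\|_{C^{2+\alpha,1+\alpha/2}}\big)$ on a bounded ball in $\cV$.

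Second, I would invoke the linear theory. For the strongly elliptic constant-coefficient-at-top-order operator $L$ on sections of $E$ over compact $M$, the classical Solonnikov / Ladyzhenskaya--Solonnikov--Ural'tseva theory (transplanted to bundles via local frames and a partition of unity) gives, for any $f\in C^{0,\alpha/2}$ and $\tau_0\in C^{2+\alpha}$, a unique $\tau\in C^{2+\alpha,1+\alpha/2}(M\times[0,\varepsilon])$ solving $\pt_t\tau = L\tau + f$, $\tau(0)=\tau_0$, together with the Schauder estimate $\|\tau\|_{C^{2+\alpha,1+\alpha/2}} \leq K\big(\|f\|_{C^{0,\alpha/2}}+\|\tau_0\|_{C^{2+\alpha}}\big)$, with $K$ independent of $\varepsilon\in(0,1]$ up to fixed adjustments.

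Third, set up the contraction. Define $\Psi\colon B\to C^{2+\alpha,1+\alpha/2}$ on a small closed ball $B$ of radius $\rho$ about the constant section $\sigma_0$ by letting $\Psi(\sigma)$ be the unique solution of the linear problem $\pt_t\tau = L\tau + [P(\sigma_0)+N(\sigma)]$, $\tau(0)=\sigma_0$. For $\varepsilon$ and $\rho$ sufficiently small, the Schauder estimate combined with the bilinear control on $N$ and the elementary bound $\|\tau-\sigma_0\|_{C^{2+\alpha,1+\alpha/2}}\lesssim \varepsilon^{\alpha/2}$-factors on the time-dependent part shows that $\Psi(B)\subseteq B$ and that $\Psi$ is a contraction. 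The Banach fixed-point theorem yields a unique $\sigma\in B$ with $\sigma=\Psi(\sigma)$; unpacking, this is a $C^{2+\alpha,1+\alpha/2}$ solution of $\pt_t\sigma=P(\sigma)$, $\sigma(0)=\sigma_0$. Smoothness is then obtained by standard bootstrapping: differentiating the equation yields a linear parabolic system for $\del^k\sigma$ with coefficients depending on lower derivatives of $\sigma$ already known to be H\"older-continuous, and iterated Schauder estimates upgrade $\sigma\in C^\infty$. Uniqueness among smooth solutions follows from the same contraction argument applied to the difference of any two solutions, since both must lie in $B$ for small time.

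The main obstacle is the interaction between the quasilinearity and the bundle geometry: one must verify that the contraction constants can indeed be made small by shrinking $\varepsilon$ (so that the highest-order part of $N$, which involves $(\sigma-\sigma_0)\cdot\del^2\sigma$, is absorbed rather than dominating), and that throughout the iteration the sections $\sigma(t)$ stay inside $\cV\subseteq\cU$ so that $P(\sigma(t))$ remains defined and strongly elliptic. This is the standard delicate balance in quasilinear parabolic existence proofs, and it is precisely what the parabolic Schauder estimate with its explicit dependence on $\varepsilon$ is designed to handle.
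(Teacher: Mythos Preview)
The paper does not prove this theorem at all: it is stated as a ``standard result'' with references to \cite{Chow-Knopf}, \cite{AH}, \cite{Topping}, and \cite{dgk-flows}, and the \qed\ immediately following the statement signals that no argument is supplied. Your proposal is therefore not comparable to the paper's proof, because there is none.

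That said, what you have written is a reasonable outline of the classical fixed-point approach (freeze the linearization at $\sigma_0$, invoke linear parabolic Schauder theory, set up a contraction on a small ball in parabolic H\"older space, bootstrap to smoothness). This is indeed the argument that underlies the references the paper cites. Two minor points worth tightening if you ever write it out in full: (i) your remainder $N$ actually contains the difference $(D_\sigma P - D_{\sigma_0}P)$ applied to second derivatives, and controlling this in $C^{\alpha,\alpha/2}$ requires a modulus-of-continuity argument on the top-order coefficients, not just bilinearity; (ii) the claim that $\|\tau-\sigma_0\|$ picks up a factor of $\varepsilon^{\alpha/2}$ is correct in spirit but needs the precise form of the parabolic Schauder constant's dependence on the time interval, which is where most of the care in these proofs goes. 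Neither is a genuine gap in your sketch, just places where the phrase ``standard delicate balance'' is hiding real work.
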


\subsection{Principal Symbols}\label{subsec:psymb}
Let $\s7$ be a Spin(7)-structure and consider a variation $(\s7_t)_{t\in (\varepsilon, \varepsilon)}$ with $\s7_0=\s7$ and 
\begin{align}
\Dot{\s7}_{ijkl}=\left.\frac{\pt}{\pt t} \right|_{t=0}\s7_{ijkl}=A_{ip}\s7_{pjkl}+A_{jp}\s7_{ipkl}+A_{kp}\s7_{ijpl}+A_{lp}\s7_{ijkp},
\end{align}
where $A_{ij}=h_{ij}+X_{ij}$. Since the variation of the associated Riemannian metric is $\left.\frac{\pt}{\pt t}\right|_{t=0}g_t=2h$ hence the linearization $D\Gamma^r_{ia}$ of the Christoffel symbols satisfies
\begin{equation}
\delta_{qr}D\Gamma^r_{ia}=\nabla_i h_{aq}+\nabla_a h_{iq}-\nabla_q h_{ia},
\end{equation}
where $\nabla$ is the Levi-Civita connection of the metric $g$ associated to the Spin(7)-structure $\Phi$.

\medskip

\noindent
It follows that the principal symbol of $D\Gamma^r_{ia}$, for any non-zero $\xi\in T_x^*M$, is
\begin{equation}
\sigma(\delta_{rq}D\Gamma^r_{ia})(x,\xi)(\dot\Phi)= (\xi_i h_{aq}+\xi_a h_{iq} - \xi_q h_{ia}).
\end{equation}

\noindent
Now we compute the linearization $D(\nabla \s7)_{mijkl}$ of $\nabla_m \Phi_{ijkl}$:
\begin{align*}
D(\nabla \s7)(\Dot{\s7})_{mijkl}= - D\Gamma^r_{mi}(\Dot{\s7}) \s7_{rjkl} - D\Gamma^r_{mj}(\Dot{\s7}) \s7_{irkl}- D\Gamma^r_{mk}(\Dot{\s7}) \s7_{ijrl}- D\Gamma^r_{ml}(\Dot{\s7}) \s7_{ijkr}+ \nabla_m \Dot{\s7}_{ijkl}.
\end{align*}

\medskip

\noindent
The principal symbol of the differential operator $D(\nabla\Phi)$ is:
\begin{align}
\sigma(D(\nabla \s7))(x,\xi)(\dots7)_{mijkl}&=- (\xi_m h_{iq} + \xi_i h_{mq}-\xi_q h_{mi})\s7_{qjkl}- (\xi_m h_{jq} + \xi_j h_{mq}-\xi_q h_{mj})\s7_{iqkl} \nonumber \\
& \quad - (\xi_m h_{kq} + \xi_k h_{mq}-\xi_q h_{mk})\s7_{ijql}- (\xi_m h_{lq} + \xi_l h_{mq}-\xi_q h_{ml})\s7_{ijkq} \nonumber \\
&\quad +\xi_m ( h_{ip}\s7_{pjkl}+h_{jp}\s7_{ipkl}+h_{kp}\s7_{ijpl}+h_{lp}\s7_{ijkp} \nonumber \\
& \qquad \qquad  +X_{ip}\s7_{pjkl}+X_{jp}\s7_{ipkl}+X_{kp}\s7_{ijpl}+X_{lp}\s7_{ijkp}) \nonumber \\
&=- (\xi_i h_{mq}-\xi_q h_{mi})\s7_{qjkl}- (\xi_j h_{mq}-\xi_q h_{mj})\s7_{iqkl} \nonumber \\
& \quad - (\xi_k h_{mq}-\xi_q h_{mk})\s7_{ijql}- (\xi_l h_{mq}-\xi_q h_{ml})\s7_{ijkq} \nonumber \\
&\quad +\xi_m (X_{ip}\s7_{pjkl}+X_{jp}\s7_{ipkl}+X_{kp}\s7_{ijpl}+X_{lp}\s7_{ijkp}).\label{eq:psymdelphi}
\end{align}


\noindent
The linearization $DT_{m;ib}$ of the torsion $T_{m;ib}=\frac{1}{96} \nabla_m \s7_{ijkl} \Phi_{bjkl}$ is given by
\begin{equation}
DT(\dots7)_{m;ib}=\frac{1}{96} D(\nabla \s7)_{mijkl}\s7_{bjkl}+ \textrm{l.o.t.}.
\end{equation}
Hence, its principal symbol is given by
\begin{align*}
\sigma(DT)(x,\xi)(\dot\Phi)_{m;ib}&= \frac{1}{96}\big[ - (\xi_i h_{mq}-\xi_q h_{mi})\s7_{qjkl}-(\xi_j h_{mq}-\xi_q h_{mj})\s7_{iqkl} \nonumber \\
& \qquad \qquad  - (\xi_k h_{mq}-\xi_q h_{mk})\s7_{ijql}- (\xi_l h_{mq}-\xi_q h_{ml})\s7_{ijkq} \nonumber \\
&\qquad \qquad +\xi_m (X_{iq}\s7_{qjkl}+X_{jq}\s7_{iqkl}+X_{kq}\s7_{ijql}+X_{lq}\s7_{ijkq})\big]\s7_{bjkl}
\end{align*}
We use the contraction identities in \eqref{eq:impiden2}, \eqref{eq:impiden3} and \eqref{eq:27decomp2} to compute the principal symbol of $DT$,
\begin{align}\label{eq:psymbtora}
\sigma(DT)(x,\xi)(\dot\Phi)_{m;ib}=\left(\frac 14(\xi_bh_{im}-\xi_ih_{mb}+\xi_jh_{mq}\s7_{ibjq})+\xi_mX_{ib}   \right).
\end{align}

\noindent
Using \eqref{eq:psymbtora} and \eqref{T8des}, the principal symbols of $\Div T_{ib}$ and $(\cL_{T_8}g)_{il}$ are

\begin{align*}
\sigma(D \Div T)(x,\xi)(\dot\Phi)_{ib}&=  \frac 14 (\xi_m\xi_bh_{im}-\xi_m\xi_ih_{mb}+\xi_m\xi_jh_{mq}\s7_{ibjq})+|\xi|^2X_{ib}, \\
\sigma(D \cL_{T_8}g )(x,\xi)(\dot\Phi)_{li}&=  \frac 14 \left(\xi_l\xi_mh_{im}+\xi_i\xi_mh_{lm}-2\xi_i\xi_l\tr h+\xi_l\xi_jh_{mq}\s7_{imjq}+\xi_i\xi_jh_{mq}\s7_{lmjq}\right) \nonumber \\
& \quad +\xi_l\xi_mX_{im}+\xi_i\xi_mX_{lm} \nonumber \\
&= \frac 14 \left(\xi_l\xi_mh_{im}+\xi_i\xi_mh_{lm}-2\xi_i\xi_l\tr h\right) +\xi_l\xi_mX_{im}+\xi_i\xi_mX_{lm}. 
\end{align*}

\noindent
Recall from \eqref{ricci} that the Ricci curvature in terms of the torsion tensor is given by
\begin{align*}
R_{ij}=4\del_iT_{a;ja}-4\del_aT_{i;ja}-8T_{i;jb}T_{a;ba}+8T_{a;jb}T_{i;ba}.    
\end{align*}
From the computations above we obtain that the symbol of $D\Ric$ is given by
\begin{align*}
\sigma(D\Ric)(x,\xi)(\dot\Phi)_{ij}&=4\sigma(D\del T)_{ia;ja}-4\sigma(D\del T)_{ai;ja} \\
&= (\xi_i\xi_ah_{aj}-\xi_i\xi_j \tr h+4\xi_i\xi_aX_{ja})-(-\xi_a\xi_jh_{ia}+|\xi|^2h_{ij}+4\xi_a\xi_iX_{ja})
\end{align*}
which simplifies to
\begin{align*}
\sigma(D\Ric)(x,\xi)(\dot\Phi)_{jk}&= -|\xi|^2h_{ij}-\xi_i\xi_j\tr h+(\xi_i\xi_ah_{aj}+\xi_a\xi_jh_{ia}).
\end{align*}

\noindent
Similarly, the principal symbol of the scalar curvature is
\begin{align*}
\sigma(DR)(x,\xi)(\dot\Phi)&= -2|\xi|^2\tr h+2\xi_j\xi_ah_{ja}.    
\end{align*}
We summarize the above computations in the following
\begin{lemma}
Consider a variation $\pt_t\Phi=(h+X)\diamond \Phi$ of a Spin(7)-structure $\Phi$ with $h$ a symmetric $2$-tensor and $X\in \Omega^2_7(M)$.. For any nonzero $\xi\in T^*_xM$, we have the following principal symbols of first and second order nonlinear differential operators:
\begin{align}
\sigma(DT)(x,\xi)(\dot\Phi)_{m;ib}&=\left(\frac 14(\xi_bh_{im}-\xi_ih_{mb}+\xi_jh_{mq}\s7_{ibjq})+\xi_mX_{ib}   \right), \label{eq:psymbtor}\\
\sigma(D \Div T)(x,\xi)(\dot\Phi)_{ib}&=  \frac 14 (\xi_m\xi_bh_{im}-\xi_m\xi_ih_{mb}+\xi_m\xi_jh_{mq}\s7_{ibjq})+|\xi|^2X_{ib}, \label{eq:psymbdivT}\\
\sigma(D \cL_{T_8}g )(x,\xi)(\dot\Phi)_{li}&= \frac 14 \left(\xi_l\xi_mh_{im}+\xi_i\xi_mh_{lm}-2\xi_i\xi_l\tr h+\right) +\xi_l\xi_mX_{im}+\xi_i\xi_mX_{lm},
\label{eq:psymbsymT}\\
\sigma(D\Ric)(x,\xi)(\dot\Phi)_{jk}&= -|\xi|^2h_{ij}-\xi_i\xi_j\tr h+(\xi_i\xi_ah_{aj}+\xi_a\xi_jh_{ia}), \label{eq:psymbric}\\
\sigma(DR)(x,\xi)(\dot\Phi)&= -2|\xi|^2\tr h+2\xi_j\xi_ah_{ja} \label{eq:psymbscalar}.
\end{align}
\end{lemma}

\subsection{Failure of parabolicity of the flow}\label{subsec:ste}
\noindent
Consider the differential operator $L:\Gamma(S^2T^*M)\oplus \Omega^2_7(M)\rightarrow \Omega^4(M)$
\begin{align*}
 L(h,X)&=h\diamond \Phi + X\diamond \Phi.   
\end{align*}
Clearly, \eqref{gfloweqn} is $\partial_t \Phi(t)=L(-\Ric+2(\cL_{T_8}g) + (T*T)-|T|^2g, 2 \Div T)$. Since $\Ric$ and $T$ are diffeomorphism invariant tensors, $\varphi^*L(\Phi)=L(\varphi^*\Phi)$ for the operator in \eqref{gfloweqn}. For the purposes of short time existence of the flow we are only interested in the highest order term so we instead consider the operator (which we still call $L$)
\begin{align}\label{eq:opL1}
L(\Phi) = L(-\Ric+2\mathcal{L}_{T_8}g, 2\Div T) = (-\Ric+2\mathcal{L}_{T_8}g+2\Div T)\diamond \Phi.    
\end{align}
Moreover, using Proposition~\ref{prop:diaproperties1}, we will view $L:\Gamma(S^2T^*M)\oplus \Omega^2_7\rightarrow \Gamma(S^2T^*M)\oplus \Omega^2_7$. Since the bundle metric $\langle (h_1, X_1), (h_2, X_2) \rangle = \langle h_1, h_2 \rangle + \langle X_1, X_2 \rangle$ on $\Gamma(S^2T^*M) \oplus \Omega^2_7(M)$ is uniformly equivalent to the natural inner product on $\Omega^4_{1\oplus 7\oplus 35}(M)$, the operator $L$ is strongly elliptic if and only if there is a constant $c>0$ such that for any $x \in M$, $\xi \in T^*_x M$, $\xi \neq 0 $, and any $(h,X) \in \Gamma(S^2T^*M)\oplus \Omega^2_7(M)$, we have
\begin{align}\label{eq:strellp}
\langle \sigma_{\xi}(L) (h, X), (h, X) \rangle \geq c | (h, X) |^2 =c ( |h|^2 + |X|^2 ).    
\end{align}
We will see below that the operator $L$ is, in fact, \emph{not} elliptic (and hence \eqref{gfloweqn} not parabolic) but the failure of ellipticity is only due to the diffeomorphism invariance of the tensors in the definition of $L$. As a result, we use a modified DeTurck's trick to prove short-time existence in Theorem~\ref{thm:ste}.

\medskip

\noindent
We first note, using \eqref{eq:psymbric}, \eqref{eq:psymbsymT} and \eqref{eq:psymbdivT}, that
\begin{align}\label{eq:opL2}
\sigma_{\xi}(L)(h,X)_{ij}&= |\xi|^2h_{ij}-\frac 12 (\xi_i\xi_ah_{aj}+\xi_a\xi_jh_{ia})  + 2\xi_i\xi_mX_{jm}+2\xi_j\xi_mX_{im} \nonumber \\
& \quad  +\frac 12 (\xi_m\xi_jh_{im}-\xi_m\xi_ih_{mj}+\xi_m\xi_kh_{mq}\s7_{ijkq})+2|\xi|^2X_{ij}.
\end{align}

\noindent
In order to analyze the kernel of $\sigma_{\xi}(L)$, we define the map
$$ \delta^* \colon \Gamma(TM) \to \Omega^4, \qquad \delta^* W = \cL_W \Phi.$$
From~\eqref{eq:liePhiexp} we have
\begin{equation} \label{eq:delta-star}
\delta^* W = \cL_W \Phi = \left(\tfrac{1}{2} (\cL_W g)+T(W)+(\del W)_7\right) \diamond \Phi.
\end{equation}

\begin{proposition} \label{prop:symbol-Lie-derivative}
Let $\delta^* \colon \Gamma(TM) \to \Omega^4$ be as in~\eqref{eq:delta-star}. For any nonzero $\xi \in T^*_x M$, we have
\begin{equation} \label{eq:LieSymbol}
\begin{aligned}
[\pi_{1+35} \circ \sigma_{\xi} (\delta^*) (W)]_{jk} & = \tfrac{1}{2} (\xi_j W_k + \xi_k W_j), \\
[\pi_7 \circ \sigma_{\xi} (\delta^*) (W)]_{jk}& =\frac 18\xi_jW_k-\frac 18\xi_kW_j-\frac 18\xi_aW_b\Phi_{abjk} , 
\end{aligned}
\end{equation}
and $\sigma_{\xi}(\delta^*) \colon T^*_x M \to \Lambda^4 (T^*_x M)$ is injective. Here $\pi_7$ denotes the identification of $\Omega^4_7$ component of $\sigma_\xi(\delta^*)$ with an element in $\Omega^2_7$.
\end{proposition}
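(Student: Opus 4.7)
The formula \eqref{eq:delta-star} writes $\delta^* W$ as the diamond product of a $(0,2)$-tensor with $\Phi$. Since $\diamond$ is algebraic and the isomorphism in Proposition~\ref{prop:diaproperties1} identifies $\Omega^0 \oplus S_0 \oplus \Omega^2_7$ with $\Omega^4_{1\oplus 35\oplus 7}$, computing $\sigma_\xi(\delta^*)$ reduces to computing the principal symbol of the $(0,2)$-tensor part of $\tfrac{1}{2}\cL_W g + T(W) + (\nabla W)_7$ as a first-order operator in $W$, and then reading off its decomposition into symmetric and $\Omega^2_7$-components. The $T(W)$ term is \emph{zeroth order} in $W$, so it contributes nothing to the symbol; this is the key simplification.

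\textbf{Step 1: the $1+35$ component.} The only contribution to the symmetric part of $\tfrac{1}{2}\cL_W g + T(W) + (\nabla W)_7$ comes from $\tfrac{1}{2}\cL_W g$, whose components are $\tfrac{1}{2}(\nabla_j W_k + \nabla_k W_j)$. Replacing each covariant derivative by multiplication with $\xi$ gives the principal symbol
\begin{equation*}
[\pi_{1+35} \circ \sigma_\xi(\delta^*)(W)]_{jk} = \tfrac{1}{2}(\xi_j W_k + \xi_k W_j),
\end{equation*}
as claimed.

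\textbf{Step 2: the $7$ component.} The $\Omega^2_7$-part of $\tfrac{1}{2}\cL_W g + T(W) + (\nabla W)_7$ is $(\nabla W)_7$ modulo lower-order (zeroth-order) terms coming from $T(W)$. The skew part of $\nabla W$ has components $\tfrac{1}{2}(\nabla_j W_k - \nabla_k W_j)$, so after replacing $\nabla$ by $\xi$ and projecting via the formula \eqref{eq:pi7} for $\pi_7$ I obtain
\begin{align*}
[\pi_7 \circ \sigma_\xi(\delta^*)(W)]_{jk}
 &= \tfrac{1}{4}\cdot\tfrac{1}{2}(\xi_j W_k - \xi_k W_j) - \tfrac{1}{8}\cdot\tfrac{1}{2}(\xi_a W_b - \xi_b W_a)\Phi_{abjk}\\
 &= \tfrac{1}{8}\xi_j W_k - \tfrac{1}{8}\xi_k W_j - \tfrac{1}{8}\xi_a W_b \Phi_{abjk},
\end{align*}
where in the last line I used the skew-symmetry of $\Phi_{abjk}$ in $a,b$ to combine the two $\Phi$-terms. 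This matches the claimed expression.

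\textbf{Step 3: injectivity.} Suppose $\sigma_\xi(\delta^*)(W) = 0$. Since the $(1+35)$- and $7$-components lie in orthogonal subspaces (via Proposition~\ref{prop:diaproperties1}), both must vanish. From the vanishing of the $1+35$ part, $\xi_j W_k + \xi_k W_j = 0$ for all $j,k$. Contracting with $\xi^j \xi^k$ yields $2|\xi|^2 (\xi \cdot W) = 0$, so $\xi \cdot W = 0$, and then contracting with $\xi^j$ alone gives $|\xi|^2 W_k = 0$. Since $\xi \neq 0$, this forces $W = 0$, proving that $\sigma_\xi(\delta^*)$ is injective. (The main subtlety to double-check in this plan is confirming that $T(W)$ is genuinely zeroth-order in $W$ and that $(\nabla W)_7$ already lies in $\Omega^2_7$ so no further projection is hidden before diamond-ing with $\Phi$, but both follow directly from \eqref{eq:liePhiexp} and Proposition~\ref{prop:diaproperties1}.)
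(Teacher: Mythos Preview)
Your proposal is correct and follows essentially the same approach as the paper: both read off the two components directly from \eqref{eq:delta-star} using $(\cL_W g)_{jk}=\nabla_j W_k+\nabla_k W_j$ and the projection formula \eqref{eq:pi7}, noting that $T(W)$ is zeroth order in $W$. Your injectivity argument (contract with $\xi^j\xi^k$, then with $\xi^j$) is a minor variant of the paper's (multiply $\xi_jW_k+\xi_kW_j=0$ by $\xi_jW_k$ and sum to get $|\xi|^2|W|^2+\langle\xi,W\rangle^2=0$), and both are equally valid.
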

\begin{proof}
The expressions in~\eqref{eq:LieSymbol} follow from~\eqref{eq:delta-star}, because $(\cL_W g)_{jk} = \del_j W_k + \del_k W_j$ and $((\del W)_7)_{jk}= \frac 14((\del W)_{\text{skew}})_{jk}-\frac 18 ((\del W)_{\text{skew}})_{ab}\Phi_{abjk}=\frac 18\del_jW_k-\frac 18\del_kW_j-\frac 18 \del_aW_b\Phi_{abjk}$. Suppose $W \in  \ker \sigma_{\xi}(\delta^*)$. In particular we get $\xi_j W_k + \xi_k W_j = 0$. Multiplying by $\xi_j W_k$ and summing, we obtain
$$ 0 = |\xi|^2 |W|^2 + \langle W, \xi \rangle^2, $$
which implies that $W = 0$ as $\xi\neq 0$, so $\sigma_{\xi}(\delta^*)$ is injective.
\end{proof}

\noindent
Recall that $L(\Phi)$ is invariant under diffeomorphisms, that is, 
\begin{equation*}
L (\varphi^* \Phi) = \varphi^* (L(\Phi)),
\end{equation*}
for any diffeomorphism $\varphi \colon M \to M$. It follows that for any vector field $W \in \Gamma(TM)$, we have
\begin{equation} \label{eq:diffeo_invariance}
\cL_W (L(\Phi)) = D_{\Phi} L(\cL_W \Phi).
\end{equation}
Since $W \mapsto \cL_W (L(\Phi))$ is a first order linear differential operator on $W$, whereas 
$$ W \mapsto D_{\Phi} L(\cL_W \Phi) = (D_{\Phi} L \circ \delta^*) (W)$$ is \emph{a priori} a third order differential operator, it follows that
\begin{equation*}
\sigma_{\xi} (D_{\Phi} L \circ \delta^*) = \sigma_{\xi} (D_{\Phi} L) \circ \sigma_{\xi} (\delta^*) = 0.
\end{equation*}
and hence
\begin{equation} \label{eq:im-delta-star}
\text{im}( \sigma_{\xi} (\delta^*)) = \left\{ \frac 12\xi_iV_j+\frac 12\xi_jV_i+\frac 18\xi_iV_j-\frac 18\xi_jV_i-\frac 18\xi_aV_b\Phi_{abij} : V \in T^*_x M \right \} \subseteq \ker \sigma_{\xi} (D_{\Phi} L).
\end{equation}

\noindent
Hence, by the injectivity of $\sigma_{\xi} (\delta^*)$, the principal symbol of $L$ has
\begin{align}\label{dimpsymbg}
 \text{dim} \ker(\sigma_{\xi}(DL)) \geq 8 = \text{dim}\ M   
\end{align}that is due to diffeomorphism invariance, so $L$ is never an elliptic differential operator. 

\begin{remark}
The relation in \eqref{eq:im-delta-star} can be checked directly by putting $h_{ij}=\frac 12(\xi_iV_j+\xi_jV_i)$ and $X_{ij}=\frac 18(\xi_iV_j-\xi_jV_i-\xi_aV_b\Phi_{abij})$ in \eqref{eq:opL2} (which is, of course, expected).    
\end{remark}

\medskip

\noindent
We now show that the dimension of kernel of $\sigma(L)$ is at most $8$. To do so, we introduce the following two operators. Consider

\begin{equation} \label{eq:B-maps}
\begin{aligned}
B_1 & \colon S^2 (T^*_x M) \to T^*_x M, & B_1 (h)_k & = \xi_a h_{ak} - \tfrac{1}{2} \xi_k \tr h, \\
B_2 & \colon T^*_x M \to \Lambda^2_7(T^*_x M), & B_2 (W)_{ij} & = \frac 18(\xi_iW_j-\xi_jW_i-\xi_aW_b\Phi_{abij}).
\end{aligned}
\end{equation}
We recall that the map $B_1$ is the symbol of the \emph{Bianchi map} $\cS^2 \to \Omega^1$ given by $h \mapsto \del_ih_{ik} - \frac{1}{2} \del_k(\tr h)$. The Ricci curvature $\Ric$ lies in the kernel of the Bianchi map by the twice contracted Riemannian second Bianchi identity. The map $B_2$ is the $\pi_7\circ \sigma_{\xi}(\delta^*)$ map in \eqref{eq:LieSymbol} and is precisely the symbol of the operator $W\mapsto (\del W)_7$. The reason we need the map $B_2$ is because while doing a modified version of the DeTurck's trick, we need to add a term of the form $\cL_W \Phi$ to the right hand side of \eqref{gfloweqn}, and the operator $(\del W)_7, W\in \Gamma(TM)$ shows up in the $\Omega^4_7$ part of $\cL_W \Phi$, by equation~\eqref{eq:liePhiexp}.

\noindent
It is convenient to define the operator $\widetilde B \colon S^2 \oplus \Omega^2_7 \to \Omega^1$ by 
\begin{equation} \label{eq:tildeB1}
\tilde B(h,X)_k = B_1(h)_k - 2 \sigma_{\xi} (D_{\Phi}T_8) (h,X)_k.
\end{equation}

\noindent
We can rewrite the components of $\sigma_{\xi}(DL)$ in terms of the map $\wtd B$. 
\begin{proposition} \label{prop:ricci_like}
Let $L$ be the operator as in~\eqref{eq:opL1}. In terms of the maps $B_1, B_2$ of~\eqref{eq:B-maps} and $\wtd B$ of~\eqref{eq:tildeB1}, the $1+35$ and $7$ parts of the principal symbol of linearization $D_{\Phi} L$ can be expressed as
\begin{equation} \label{eq:psymbP2}
\begin{aligned}
\pi_{1+35} \circ \sigma_{\xi} (DL) (h, X)_{ij} & = |\xi|^2 h_{ij} - \xi_i \wtd B(h,X)_j - \xi_j \wtd B(h,X)_i, \\
\pi_7 \circ \sigma_{\xi} (DL) (h, X)_{ij} & = 2|\xi|^2X_{ij}-8(B_2(\wtd B(h,X)))_{ij}+2\xi_j\xi_mX_{im}-2\xi_i\xi_mX_{jm}+2\xi_k\xi_mX_{qm}\Phi_{kqij}.
\end{aligned}
\end{equation}
\end{proposition}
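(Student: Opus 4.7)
The strategy is to start from the explicit formula~\eqref{eq:opL2} for $\sigma_\xi(L)(h,X)_{ij}$ and split it into its symmetric and skew parts in the free indices $(i,j)$. By Proposition~\ref{prop:diaproperties1}, the $\diamond$-map identifies $\Gamma(S^2T^*M)\oplus \Omega^2_7$ with $\Omega^4_{1+35}\oplus \Omega^4_7$, so this splitting corresponds precisely to the projections $\pi_{1+35}$ and $\pi_7$; in particular, the skew-in-$(i,j)$ component automatically lies in $\Omega^2_7$, and no further projection is required.

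For the $\pi_{1+35}$ identity, I will collect the symmetric terms from~\eqref{eq:opL2} and factor them in the form $|\xi|^2 h_{ij}-\xi_i W_j-\xi_j W_i$; a short calculation gives $W_k=\tfrac{1}{2}\xi_a h_{ak}-2\xi_m X_{km}$. To match $W$ with $\wtd B(h,X)=B_1(h)-2\sigma_\xi(D_\Phi T_8)(h,X)$, I will compute $\sigma_\xi(D_\Phi T_8)$ directly from the symbol of the torsion in~\eqref{eq:psymbtor} together with the contraction $(T_8)_j=T_{a;ja}$ from~\eqref{T8des}; using that $h_{ad}\Phi_{jacd}=0$ (symmetry of $h$ played against antisymmetry of $\Phi$ in the pair $(a,d)$), this yields
\[
\sigma_\xi(D_\Phi T_8)(h,X)_j=\tfrac{1}{4}\xi_a h_{aj}-\tfrac{1}{4}\xi_j\tr h+\xi_a X_{ja}.
\]
Substituting into the definition of $\wtd B$ cancels the $\tr h$ contributions and recovers $W$ exactly, establishing the $\pi_{1+35}$ formula.

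For the $\pi_7$ identity, I will collect the skew-in-$(i,j)$ part of~\eqref{eq:opL2}, namely $\tfrac{1}{2}(\xi_j\xi_m h_{mi}-\xi_i\xi_m h_{mj})+\tfrac{1}{2}\xi_m\xi_k h_{mq}\Phi_{ijkq}+2|\xi|^2 X_{ij}$. On the proposed right-hand side, I will expand $-8B_2(\wtd B(h,X))_{ij}=-\xi_i\wtd B_j+\xi_j\wtd B_i+\xi_a\wtd B_b\Phi_{abij}$ using the formula for $\wtd B$ obtained above. The $h$-contributions of this expansion, after applying the 4-form symmetry $\Phi_{ijkq}=\Phi_{kqij}$ (swapping the first and last pairs is an even permutation), reproduce exactly the three $h$-pieces of the skew part of~\eqref{eq:opL2}. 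The $X$-contributions of the expansion then combine with the explicit terms $+2\xi_j\xi_m X_{im}-2\xi_i\xi_m X_{jm}+2\xi_k\xi_m X_{qm}\Phi_{kqij}$, and a systematic relabeling using the skew-symmetry of $X$ and the same 4-form permutation identity forces them to cancel pairwise, leaving only $2|\xi|^2 X_{ij}$.

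The main technical obstacle lies in this last step: one must carefully re-index three distinct $X\otimes\Phi$-type contractions and track the signs produced by $X^{T}=-X$ together with the total antisymmetry of $\Phi$ in order to verify that the three pairs of $X$-contributions cancel. The computation is purely algebraic but demands precise bookkeeping; once it is carried out, both identities in~\eqref{eq:psymbP2} follow immediately from the symmetric/skew decomposition of~\eqref{eq:opL2}.
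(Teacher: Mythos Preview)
Your proposal is correct and follows essentially the same approach as the paper: split~\eqref{eq:opL2} into its symmetric and skew parts, compute $\wtd B(h,X)_k=\tfrac{1}{2}\xi_a h_{ak}-2\xi_m X_{km}$ explicitly from~\eqref{eq:psymbtor} and~\eqref{T8des} (using $h_{aq}\Phi_{jacq}=0$), and then match. The only organizational difference is that the paper substitutes the relation $\xi_m h_{mk}=2\wtd B(h,X)_k+4\xi_m X_{km}$ into the skew part of~\eqref{eq:opL2} to derive the $\pi_7$ formula, whereas you expand $-8B_2(\wtd B(h,X))$ on the right-hand side and verify the match termwise; these are the same algebraic computation viewed from opposite directions.
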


\begin{proof}
We note that \eqref{eq:opL2} implies
\begin{align}\label{aux1}
 \pi_{1+35}\circ \sigma_{\xi}(DL)(h,X)_{ij}&=   |\xi|^2h_{ij}-\frac 12(\xi_i\xi_ah_{aj}+\xi_a\xi_jh_{ia})  + 2\xi_i\xi_mX_{jm}+2\xi_j\xi_mX_{im}. 
\end{align}
Further, the definitions of the maps $B_1$, $\wtd B$ and equations \eqref{eq:psymbtor}, \eqref{T8des} give
\begin{align}\label{eq:tildeB2}
\wtd B(h,X)_k&= \xi_ih_{ik}-\frac 12\xi_k \tr h -2 \left(\frac 14(\xi_mh_{km}-\xi_k\tr h+\xi_jh_{mq}\s7_{kmjq})+\xi_mX_{km}   \right)   \nonumber \\
& = \frac 12\xi_ih_{ik}-2 \xi_mX_{km}
\end{align}
and hence $\xi_ih_{ik}=2\wtd B(h,X)_{k}+4\xi_mX_{km}$. Substituting this in \eqref{aux1} gives the first equation in \eqref{eq:psymbP2}. We see from \eqref{eq:opL2} that
\begin{align}
\pi_7\circ \sigma_{\xi}(DL)(h,X)_{ij}&=2|\xi|^2X_{ij}+\frac 12(\xi_j\xi_mh_{mi}-\xi_i\xi_mh_{mj}+\xi_k\xi_mh_{mq}\Phi_{kqij}). \label{aux2} \\  
\intertext{Using the expression for $\xi_ih_{ik}=2\wtd B(h,X)_k+4\xi_mX_{km}$ in \eqref{aux2} gives}
&= 2|\xi|^2X_{ij}+ \frac 12 \left(\xi_j(2\wtd B(h,X)_i+4\xi_mX_{im})-\xi_i(2\wtd B(h,X)_j+4\xi_mX_{jm}) \right. \nonumber \\
& \qquad \qquad \qquad\qquad \left.+\xi_k(2\wtd B(h,X)_q+4\xi_mX_{qm})\Phi_{kqij}    \right), \\
\intertext{which on using the definition of the map $B_2$ from \eqref{eq:B-maps} give}
&=2|\xi|^2X_{ij}-8(B_2(\wtd B(h,X)))_{ij}+2\xi_j\xi_mX_{im}-2\xi_i\xi_mX_{jm}+2\xi_k\xi_mX_{qm}\Phi_{kqij} \nonumber
\end{align}
which is the second equation in \eqref{eq:psymbP2}.
\end{proof}

\begin{remark}
The operator $\wtd B$ plays a role similar to the role of the Bianchi operator $B_1$ in the analysis of the principal symbol of the Ricci tensor, for instance in the Ricci flow and for the analysis of the symbol of a large class of flows of $\mathrm{G_2}$-structures as in \cite{dgk-flows}.    
\end{remark}

\medskip

\noindent
Our main goal is to prove that the dimension of kernel of the symbol of the operator $L$ is $8$ which will prove that the diffeomorphism invariance of $\Ric$ and $T$ are the only reason for the failure of parabolicity of \eqref{gfloweqn} and hence we can use a modified version of the DeTurck's trick. To calculate an upper bound on $\text{dim}\ \ker(\sigma(DL))$, we compute the adjoint of the map $\wtd B$.

\noindent
Let $Y\in \Gamma(TM)$. Then
\begin{align}
\langle \wtd B((h,X)), Y\rangle&= \left(\frac 12 \xi_ih_{ik}-2\xi_mX_{km} \right)Y_k \nonumber \\
&=\frac 14h_{ik}(\xi_iY_k+\xi_kY_i)-2X_{km}\left(\frac 18\left(\xi_mY_k-\xi_kY_m\right)-\frac 18 \xi_aY_b\Phi_{abmk} \right) \nonumber,
\end{align}
where we used \eqref{eq:pi7} for $X\in \Omega^2_7(M)$. Thus,
$\wtd B^*:\Omega^1\to S^2\oplus \Omega^2_7$, $\wtd B^*(Y)=(\wtd B_1^*(Y), \wtd B^*_2(Y))$ with
\begin{equation}\label{tildeBadj}
    \begin{aligned}
      \wtd B^*_1(Y)_{ij}&= \frac 14(\xi_iY_k+\xi_kY_i) \\
      \wtd B^*_2(Y)_{ij}&= -\frac 14(\xi_iY_j-\xi_jY_i-\xi_aY_b\Phi_{abij}).
    \end{aligned}
\end{equation}

\begin{lemma}\label{lem:tildeBadj}
The map $\wtd B^* \colon \Omega^1 \to S^2 \oplus \Omega^2_7$ is injective. Consequently, $\dim (\ker \wtd B) = 35$.    
\end{lemma}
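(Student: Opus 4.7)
The plan is to establish injectivity of $\wtd B^*$ using just the symmetric component $\wtd B^*_1$, following the same kind of argument that appeared in the proof of Proposition~\ref{prop:symbol-Lie-derivative}, and then to deduce the kernel dimension of $\wtd B$ by a rank-nullity count.

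First, I would suppose $Y \in \Omega^1$ lies in $\ker \wtd B^*$, so that both $\wtd B^*_1(Y) = 0$ and $\wtd B^*_2(Y) = 0$. From $\wtd B^*_1(Y) = 0$ alone we obtain the pointwise relation $\xi_i Y_j + \xi_j Y_i = 0$ for all $i,j$. Contracting this identity with $\xi_i Y_j$ and summing yields
\begin{equation*}
0 \;=\; |\xi|^2 |Y|^2 + \langle \xi, Y\rangle^2.
\end{equation*}
Since $\xi \neq 0$, both non-negative terms must vanish individually, forcing $Y = 0$. This proves injectivity of $\wtd B^*$. (In particular, the $\wtd B^*_2$ component carries no additional information for injectivity, exactly as in Proposition~\ref{prop:symbol-Lie-derivative}.)

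For the dimension statement, I would compute ranks: since $\wtd B^*$ is injective from the $8$-dimensional space $\Omega^1$, its image has dimension $8$, so $\operatorname{rank}(\wtd B^*) = 8$. As $\wtd B$ and $\wtd B^*$ have the same rank, $\operatorname{rank}(\wtd B) = 8$. The domain $S^2 \oplus \Omega^2_7$ has pointwise dimension $36 + 7 = 43$, so by rank-nullity
\begin{equation*}
\dim(\ker \wtd B) \;=\; 43 - 8 \;=\; 35,
\end{equation*}
as claimed.

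The argument is essentially bookkeeping once the injectivity is in hand, and the injectivity itself reduces to the same elementary contraction trick already used for $\sigma_\xi(\delta^*)$. No genuine obstacle is anticipated; the only mild subtlety is keeping track of the pointwise dimensions of $S^2(T^*_xM)$ and $\Omega^2_7$ correctly in the final subtraction.
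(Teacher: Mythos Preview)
Your proof is correct and follows essentially the same approach as the paper: injectivity of $\wtd B^*$ is obtained from the $\wtd B^*_1$ component alone via the contraction identity $|\xi|^2|Y|^2 + \langle \xi, Y\rangle^2 = 0$, and the dimension count is the standard rank--nullity/adjoint argument. The only cosmetic difference is that the paper phrases the last step as $\ker \wtd B = (\operatorname{im}\wtd B^*)^{\perp}$ rather than via equality of ranks, which is of course equivalent.
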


\begin{proof}
Let $Y\in \ker \wtd B^*$. Then $\wtd B^*_1(Y)=0$ and hence
\begin{align*}
0&= |\wtd B^*_1(Y)|^2 \\
&= \frac {1}{16}(\xi_iY_k+\xi_kY_i)(\xi_iY_k+\xi_kY_i)\\
&= \frac {1}{8}(|\xi|^2|Y|^2 + \langle \xi, Y\rangle^2)
\end{align*}
and since $\xi\neq 0$, $Y=0$. This proves that $\wtd B^*$ is injective and hence $\text{dim}\ \text{im}(\wtd B^*)=8$. Since $S^2\oplus \Omega^2_7= \ker \wtd B^* \oplus \text{im}\  \wtd B^*$, we get that $\text{dim} (\ker \wtd B)=36+7-8=35$.
\end{proof}

\noindent
We prove our main result in this section on the $\text{dim}$ $\ker (\sigma(DL))$.

\begin{proposition} \label{prop:dimL}
Consider the operator $L$ from \eqref{eq:opL1}. Then 
$\ker(\sigma_{\xi}(DL))=\text{im} (\sigma_{\xi}(\delta^*))$ and hence $\text{dim} \ker(\sigma(DL))=8=\text{dim}\  M$ and the failure of parabolicity of \eqref{gfloweqn} is only due to diffeomorphism invariance of the tensors involved.
\end{proposition}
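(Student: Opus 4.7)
The plan is to leverage the fact that the inclusion $\text{im}(\sigma_\xi(\delta^*)) \subseteq \ker(\sigma_\xi(D_\Phi L))$ is already recorded in~\eqref{eq:im-delta-star}, together with the injectivity of $\sigma_\xi(\delta^*)$ from Proposition~\ref{prop:symbol-Lie-derivative}, which gives $\dim\,\text{im}(\sigma_\xi(\delta^*)) = \dim T^*_x M = 8$. It therefore suffices to establish the reverse bound $\dim\ker(\sigma_\xi(D_\Phi L))\leq 8$; a dimension count then forces equality of the two subspaces and yields the proposition.

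The key step is to show that the restriction of $\widetilde B$ to $\ker(\sigma_\xi(D_\Phi L))$ is \emph{injective} as a map into $T^*_x M$, since this immediately delivers $\dim\ker(\sigma_\xi(D_\Phi L))\leq 8$. So suppose $(h,X)\in\ker(\sigma_\xi(D_\Phi L))$ with $\widetilde B(h,X)=0$. Substituting into the first identity in~\eqref{eq:psymbP2} reduces the right hand side to $|\xi|^2 h_{ij}$, and since $\xi\neq 0$ this forces $h=0$.

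Feeding $\widetilde B(h,X)=0$ into the second identity of~\eqref{eq:psymbP2} leaves
\begin{equation*}
|\xi|^2 X_{ij}+\xi_j\xi_m X_{im}-\xi_i\xi_m X_{jm}+\xi_k\xi_m X_{qm}\,\Phi_{kqij}=0.
\end{equation*}
Contracting this equation with $\xi_j$ and using that $\xi_j\xi_m X_{jm}=0$ (a symmetric tensor paired with an antisymmetric one) together with $\xi_j\xi_k\,\Phi_{kqij}=0$ (same reason, by the antisymmetry of $\Phi$ in the index pair $(j,k)$), one obtains $2|\xi|^2\,\xi_m X_{im}=0$, hence $\xi_m X_{im}=0$ for every $i$. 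Plugging this identity back into the displayed equation annihilates the last three terms, leaving $|\xi|^2 X_{ij}=0$ and thus $X=0$. Thus $(h,X)=0$, completing the injectivity argument and hence the proof.

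The only real conceptual obstacle is the prior identification of the correct Bianchi-type operator, namely the $\widetilde B$ of~\eqref{eq:tildeB1}, which packages the twice-contracted second Bianchi contribution from the symbol of $\Ric$ together with the $T_8$-correction from the $\cL_{T_8} g$ term in~\eqref{gfloweqn}; this is precisely the content of Proposition~\ref{prop:ricci_like}. Once that formulation of the principal symbol is available, the remainder is direct linear algebra on $T^*_x M$ as above, and the subsequent modified DeTurck argument in~\textsection\ref{subsec:mdtt} can proceed on the basis of this exact kernel identification.
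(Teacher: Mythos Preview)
Your proof is correct and follows essentially the same approach as the paper: both establish $\ker(\sigma_\xi(DL))\cap\ker\widetilde B=\{0\}$ via the identical symbol computation from Proposition~\ref{prop:ricci_like}, first forcing $h=0$ from the $1+35$ part and then $X=0$ by contracting the $7$ part with $\xi$. Your framing (``$\widetilde B$ restricted to $\ker\sigma_\xi(DL)$ is injective into the $8$-dimensional $T^*_xM$'') is marginally more direct than the paper's, which instead invokes Lemma~\ref{lem:tildeBadj} to get $\dim\ker\widetilde B=35$ and then does a dimension count in the ambient $43$-dimensional space, but the substance is the same.
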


\begin{proof}
We observe from Proposition~\ref{prop:ricci_like}, equation \eqref{eq:psymbP2} that
\begin{align}\label{dimLaux1}
\sigma_{\xi}(DL)\mid_{\ker \wtd B}(h,X)_{ij}&= |\xi|^2h_{ij}+2|\xi|^2X_{ij}+2\xi_j\xi_mX_{im}-2\xi_i\xi_mX_{jm}+2\xi_k\xi_{m}X_{qm}\Phi_{kqij}
\end{align}
and hence if $(h,X)\in \ker (\sigma_{\xi}(DL))\cap \ker \wtd B$ then $h=0$ and
\begin{align}\label{dimLaux2}
|\xi|^2X_{ij}+\xi_j\xi_mX_{im}-\xi_i\xi_mX_{jm}+\xi_k\xi_{m}X_{qm}\Phi_{kqij}&=0.  
\end{align}
Multiplying both sides by $\xi_i$ gives
\begin{align}
&|\xi|^2\xi_iX_{ij}+\xi_i\xi_j\xi_mX_{im}-|\xi|^2\xi_mX_{jm}+\xi_i\xi_k\xi_mX_{qm}\Phi_{kqij}=0 \nonumber
\intertext{which on using the fact that $\xi_i\xi_j$ is symmetric in $i$ and $j$ whereas $X_{ij}$ and $\Phi_{ijkl}$ are skew-symmetric in $i, j$ and $\xi\neq 0$ gives}
&\xi_{i}X_{ij}=0, \nonumber
\end{align}
and so $X(\xi)=0$. Therefore, from \eqref{dimLaux2}, we get $|\xi|^2X_{ij}=0$ and hence $X=0$. Thus, 
\begin{align}\label{dimLaux3}
\ker (\sigma_{\xi}(DL))\cap \ker \wtd B=\{0\}.    
\end{align}
Since $\text{dim} \ker(\wtd B)=35$ by Lemma~\ref{lem:tildeBadj}, we get
\begin{align}\label{dimpsymbl}
\text{dim} \ker(\sigma_{\xi}(DL))\leq 36+7-35=8,   
\end{align}
which in combination with \eqref{dimpsymbg} gives $\text{dim} \ker(\sigma_{\xi}(DL))=8$.
\end{proof}

\subsection{A modified DeTurck's trick}\label{subsec:mdtt}
In this section we prove that, given a background Spin(7)-structure $\wtd \Phi$, it is always possible to modify the operator $L$ from \eqref{eq:opL1} to an operator which is strongly elliptic at $\wtd \Phi$, that is an operator $\wtd L$ whose symbol satisfies
$$ \langle [\sigma_{\xi} (D_{\tilde \Phi} \wtd L)] (h, X), (h, X) \rangle \geq c |\xi|^2 | (h,X) |^2 = c |\xi|^2(|h|^2+|X|^2)$$for some constant $c > 0$. We do this by doing a modification of the DeTurck's which was originally formulated to give an alternative proof of the short-time existence and uniqueness of solutions to the Ricci flow by DeTurck \cite{DeTurck}. 

\medskip

\noindent
Let $\wtd \Phi$ be a fixed Spin(7)-structure on $M$, for instance, one can take the initial Spin(7)-structure when running the flow. Motivated by the definition of the map $\wtd B$ in \eqref{eq:tildeB1}, we define the vector field $W(\Phi, \wtd \Phi)$ on $M$ by
\begin{align}\label{eq:detvfdef}
W^k&=g^{ij}\left(\Gamma^k_{ij}-\wtd \Gamma^k_{ij} \right)-4(T_{8})^k=\wtd W^k-4(T_8)^k,
\end{align}
where $\wtd g$ is the Riemannian metric induced by $\wtd \Phi$ and $\wtd \Gamma$ is its Christoffel symbols. The vector field $\wtd W$ is the same vector field which is used in the DeTurck's trick for the Ricci flow and the vector field $-4T_8$ is the extra term which we need for the DeTurck's trick in the Spin(7)-case. We define the modified operator $\wtd L$ as
\begin{align}
\wtd L(\Phi)&= L(\Phi)+\cL_{W(\Phi, \wtd \Phi)} \Phi    
\end{align}
where $L$ is the operator in \eqref{eq:opL1}. Using \eqref{eq:liePhiexp} we have
\begin{align*}
\cL_{W}\Phi=\cL_{\wtd W-4T_8}\Phi = \left( \frac 12 \cL_{\wtd W}g-2\cL_{T_8}g + T(\wtd W-4T_8) +(\del \wtd W-4\del T_8)_{7}  \right)\diamond \Phi.    
\end{align*}
Since we only need the highest order terms for the purpose of short-time existence and uniqueness, we neglect the $T(\wtd W-4T_8)$ term above and consequently, the operator $\wtd L$ is given by
\begin{align}\label{eq:optildeL}
\wtd L(\Phi)= \left(-\Ric + \frac 12\cL_{\wtd W}g + 2\Div T + (\del \wtd W-4\del T_8)_7  \right)\diamond \Phi.    
\end{align}
We claim that the operator $\wtd L$ is strongly elliptic. We need to calculate the principal symbol of the linearization of $\wtd L$. It is well known (for example, see \cite[\textsection 3.2]{Chow-Knopf}) that the linearization of $\wtd W$, up to lower order terms, is
\begin{align*}
(D_{\wtd \Phi} \wtd W)(h,X)=2\left(\Div_{\wtd g}h-\frac 12 \del \tr_{\wtd g}h \right).    
\end{align*}
The factor of 2 is here because the variation of metric in the Spin(7) case is given by $2h$. Thus,
\begin{align*}
\sigma_{\xi}\left(\frac 12 D\cL_{\wtd W}g \right)(h,X)_{ij}&=\sigma_{\xi} \left( \frac 12\left(\del_i(2 \Div h-\del \tr h)_j+\del_j(2 \Div h-\del \tr h)_i \right) \right)\\
&=\xi_i\xi_mh_{mj}+\xi_j\xi_mh_{mi}-\xi_i\xi_j\tr h,    
\end{align*}
which on using \eqref{eq:psymbric} gives
\begin{align}\label{eq:psymtilric}
\sigma_{\xi}\left(D\left(-\Ric+\frac 12 \cL_{\wtd W}g\right)\right)&=|\xi|^2h_{ij}.    
\end{align}
We now calculate the symbol of the remaining terms in \eqref{eq:optildeL}. Since
\begin{align*}
(\del \wtd W_7)_{ij}&= \frac 18 \del_i\wtd W_j-\frac 18 \del_j\wtd W_i-\frac 18\del_a\wtd W_b\Phi_{abij}    
\end{align*}
so 
\begin{align*}
(D_{\wtd \Phi}  \del \wtd W_7)_{ij}&= \frac 18\del_i(2\Div h-\tr h)_j-\frac 18\del_j(2\Div h-\tr h)_i-\frac 18\del_a(2\Div h-\tr h)_b\Phi_{abij}    
\end{align*}
and hence
\begin{align}\label{eq:symbaux1}
\sigma_{\xi}(D_{\wtd \Phi}  \del \wtd W_7)_{ij}&= \frac 18 \xi_i(2\xi_mh_{mj}-\xi_j\tr h) -\frac 18 \xi_j(2\xi_mh_{mi}-\xi_i\tr h)     -\frac 18 \xi_a(2\xi_mh_{mb}-\xi_b\tr h)\Phi_{abij} \nonumber \\
&=\frac 14 \xi_i\xi_mh_{mj}-\frac 14 \xi_j\xi_mh_{mi}-\frac 14 \xi_a\xi_mh_{mb}\Phi_{abij}.
\end{align}

\noindent
Similarly,
\begin{align*}
-4((\del T_8)_7)_{ij}&= -\frac 12 \del_i(T_{m;jm})+\frac 12 \del_j(T_{m;im})+\frac 12 \del_a(T_{m;bm})\Phi_{abij}    
\end{align*}
which on using \eqref{eq:psymbtor} gives
\begin{align}
-4(\sigma_{\xi}(D_{\wtd \Phi}(\del T_8)_7))_{ij}&=  -\frac 12 \xi_i\left(\frac 14(\xi_mh_{jm}-\xi_j\tr h+\xi_kh_{mq}\s7_{jmkq})+\xi_mX_{jm}\right)\nonumber \\
& \quad +\frac 12 \xi_j\left(\frac 14(\xi_mh_{im}-\xi_i\tr h+\xi_kh_{mq}\s7_{imkq})+\xi_mX_{im}\right) \nonumber \\
& \quad +\frac 12 \xi_a\left(\frac 14(\xi_mh_{bm}-\xi_b\tr h+\xi_kh_{mq}\s7_{bmkq})+\xi_mX_{bm}\right)\Phi_{abij} \nonumber \\
&= -\frac 18 \xi_i\xi_mh_{jm}+\frac 18 \xi_j\xi_mh_{im}+\frac 18 \xi_a\xi_mh_{bm}\Phi_{abij} \nonumber \\
& \quad - \frac 12\xi_i\xi_mX_{jm}+\frac 12\xi_j\xi_mX_{im}+\frac 12\xi_a\xi_mX_{bm}\Phi_{abij}.\label{eq:symbaux2}
\end{align}

\noindent
Using \eqref{eq:psymtilric}, \eqref{eq:psymbdivT}, \eqref{eq:symbaux1} and \eqref{eq:symbaux2} we compute the symbol of $\wtd L$ as
\begin{align}
\sigma_{\xi}(D\wtd L(\Phi)(h,X))_{ij}&= |\xi|^2h_{ij} + \frac 12 (\xi_m\xi_jh_{im}-\xi_m\xi_ih_{mj}+\xi_m\xi_ah_{mb}\s7_{ijab})+2|\xi|^2X_{ij}  \nonumber \\
& \quad + \frac 14 \xi_i\xi_mh_{mj}-\frac 14 \xi_j\xi_mh_{mi}-\frac 14 \xi_a\xi_mh_{mb}\Phi_{abij} -\frac 18 \xi_i\xi_mh_{jm}+\frac 18 \xi_j\xi_mh_{im}+\frac 18 \xi_a\xi_mh_{bm}\Phi_{abij} \nonumber \\
& \quad - \frac 12\xi_i\xi_mX_{jm}+\frac 12\xi_j\xi_mX_{im}+\frac 12\xi_a\xi_mX_{bm}\Phi_{abij} \nonumber \\
&= |\xi|^2h_{ij}+2|\xi|^2X_{ij} -\frac 38 \xi_i\xi_mh_{mj}+\frac 38 \xi_j\xi_mh_{mi}+\frac 38 \xi_a\xi_mh_{mb}\Phi_{abij} \nonumber \\
& \quad -\frac 12 \xi_i\xi_mX_{jm}+\frac 12 \xi_j\xi_mX_{im}+\frac 12 \xi_a\xi_mX_{bm}\Phi_{abij}.\label{eq:symbaux3}
\end{align}

\medskip

\noindent
Using \eqref{eq:symbaux3} we calculate 
\begin{align}
\left \langle \sigma_{\xi}(D\wtd L(\Phi)(h,X)), (h,X) \right \rangle &=|\xi|^2|h|^2+2|\xi|^2|X|^2-3\xi_i\xi_mh_{mj}X_{ij}-4\xi_i\xi_mX_{jm}X_{ij} \nonumber \\
&= |\xi|^2|h|^2+2|\xi|^2|X|^2 +3\langle h(\xi), X(\xi) \rangle + 4|X(\xi)|^2 \nonumber \\
& \geq |\xi|^2|h|^2+2|\xi|^2|X|^2 -3|h(\xi)||X(\xi)|+4|X(\xi)|^2 \nonumber \\
\intertext{which on using Young's inequality on the 3rd term gives}
& \geq |\xi|^2|h|^2+2|\xi|^2|X|^2 - \frac{1}{2}|h(\xi)|^2-\frac 92|X(\xi)|^2+4|X(\xi)|^2 \nonumber \\
\intertext{and we use $|h(\xi)|^2\leq |h|^2|\xi|^2$ and $|X(\xi)|^2\leq |X|^2|\xi|^2$ to get}
& \geq |\xi|^2|h|^2+2|\xi|^2|X|^2 - \frac 12 |h|^2|\xi|^2 - \frac 12 |X|^2|\xi|^2 \nonumber \\
& \geq \frac 12|\xi|^2(|h|^2+|X|^2)=\frac 12|(h,X)|^2. \label{eq:symbaux4}
\end{align}

\medskip

\noindent
The above computations leading to \eqref{eq:symbaux4} and \eqref{eq:strellp} prove the following 
\begin{proposition}\label{prop:strell}
 Let $(M^8, \wtd \Phi)$ be an $8$-manifold with a Spin(7)-structure $\wtd \Phi$ and let $\wtd L$ be the operator 
 \begin{align}
 \wtd L(\Phi)=L(\Phi)+ \cL_{W(\Phi, \wtd \Phi)} \Phi =  \left(-\Ric + 2\cL_{T_8}g+ \frac 12\cL_{W}g + 2\Div T + (\del \wtd W-4\del T_8)_7  \right)\diamond \Phi,    
 \end{align}
 where $W(\Phi, \wtd \Phi)^k=g^{ij}\left(\Gamma^k_{ij}-\wtd \Gamma^k_{ij}-4(T_8)^k\right)$. Then $\wtd L(\Phi)$ is strongly elliptic at $\wtd \Phi$. 
 \qed
\end{proposition}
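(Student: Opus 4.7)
The plan is to verify strong ellipticity by explicit symbol computation, leveraging the fact that all the ingredient symbols have already been computed in the preceding sections. The modification $\Phi \mapsto L(\Phi) + \cL_W \Phi$ is designed precisely to cancel, at the level of principal symbols, the ``Bianchi-type'' directions in the kernel of $\sigma_\xi(DL)$ identified in Proposition~\ref{prop:dimL}. So morally, I expect that the added $\tfrac{1}{2}\cL_{\wtd W} g$ term converts the $-\Ric$ contribution $|\xi|^2 h_{ij} - \xi_i \wtd B(h,X)_j - \xi_j \wtd B(h,X)_i$ into a clean $|\xi|^2 h_{ij}$, while the $(\del \wtd W - 4\del T_8)_7$ correction similarly simplifies the $\Omega^2_7$ part so that its leading term becomes $2|\xi|^2 X_{ij}$, plus cross terms controllable by Young's inequality.

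First I would compute the linearization of the DeTurck vector field $W = \wtd W - 4 T_8$ at $\wtd \Phi$. For $\wtd W$, the classical Ricci-flow calculation (up to lower order terms in $h$) gives $D_{\wtd \Phi}\wtd W(h,X) = 2(\Div_{\wtd g} h - \tfrac{1}{2}\del \tr_{\wtd g} h)$, with the factor of $2$ due to $\pt_t g = 2h$ in \eqref{gflowmetricevol}. For $T_8$, the linearization can be read off from the expression $(T_8)_j = T_{i;ji}$ and Proposition~\ref{prop:1var}, whose principal part was already computed in \eqref{eq:psymbtor}. From these one gets, via straightforward differentiation, the symbols of $\tfrac{1}{2}\cL_{\wtd W} g$, $(\del \wtd W)_7$, and $(\del T_8)_7$. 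The key observation to verify is that $\sigma_\xi(-D\Ric + \tfrac{1}{2} D\cL_{\wtd W} g)(h,X)_{ij} = |\xi|^2 h_{ij}$, which follows directly by comparing \eqref{eq:psymbric} with the symbol of $\cL_{\wtd W} g$.

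Next I would assemble all the symbols into $\sigma_\xi(D\wtd L)(h,X)_{ij}$. After substituting the expressions and combining like terms, the result should match \eqref{eq:symbaux3}: a $|\xi|^2 h_{ij} + 2|\xi|^2 X_{ij}$ diagonal leading part, together with cross terms of the schematic form $\xi_i \xi_m h_{mj}$, $\xi_i \xi_m X_{jm}$, and their skew/$\Phi$-twisted versions. Here the main bookkeeping task is tracking the numerical coefficients: the $(\del T_8)_7$ correction with coefficient $-4$ has to exactly remove the ``Bianchi-like'' $\wtd B$ contribution in the $\pi_7$ part identified in Proposition~\ref{prop:ricci_like}, otherwise strong ellipticity fails.

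Finally, to prove the pointwise estimate $\langle \sigma_\xi(D\wtd L)(h,X),(h,X)\rangle \geq c|\xi|^2(|h|^2 + |X|^2)$, I would take the inner product and observe that, by skew-symmetry of $\Phi$ and $X$, all the cross terms collapse into the two scalar quantities $\langle h(\xi),X(\xi)\rangle$ and $|X(\xi)|^2$. Applying Young's inequality to control $|\langle h(\xi),X(\xi)\rangle|$, together with the obvious bounds $|h(\xi)|^2 \leq |h|^2|\xi|^2$ and $|X(\xi)|^2 \leq |X|^2|\xi|^2$, should yield a positive constant $c > 0$ (concretely $c = \tfrac{1}{2}$, as in \eqref{eq:symbaux4}). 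The main obstacle I anticipate is not conceptual but combinatorial: ensuring that the numerical coefficients from the $-4$ factor in the DeTurck field are correctly propagated so that the cross-term bound is strictly positive. If the coefficient on $T_8$ were chosen differently, Young's inequality could fail to produce a positive lower bound, and one would then either need to rescale or abandon the trick.
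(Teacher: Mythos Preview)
Your proposal is correct and follows essentially the same approach as the paper: compute the symbol of each piece of $\wtd L$ (using the classical DeTurck linearization for $\wtd W$ and \eqref{eq:psymbtor} for the $T_8$ contributions), verify that $-\Ric + \tfrac{1}{2}\cL_{\wtd W}g$ contributes $|\xi|^2 h_{ij}$, assemble everything into \eqref{eq:symbaux3}, and then close with Young's inequality as in \eqref{eq:symbaux4}. One small imprecision: the expression $|\xi|^2 h_{ij} - \xi_i \wtd B(h,X)_j - \xi_j \wtd B(h,X)_i$ is the $\pi_{1+35}$ symbol of the full operator $L$ (which includes $2\cL_{T_8}g$), not of $-\Ric$ alone, but this does not affect the argument since the $2\cL_{T_8}g$ term cancels against the $-2\cL_{T_8}g$ coming from $\cL_{-4T_8}\Phi$ before you compute symbols.
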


\subsection{Short-time existence and uniqueness}\label{subsec:stef}
In this section, we prove the main theorem of the paper by using the modified DeTurck's trick whose details were given in \textsection~\ref{subsec:mdtt}.

\begin{theorem}\label{thm:ste}
Let $(M^8, \Phi_0)$ be a compact $8$-manifold with a Spin(7)-structure $\Phi_0$ and consider the negative gradient flow \eqref{gfloweqn} of the natural energy functional $E$ in \eqref{energyfuncdefn}. Then there exists $\varepsilon >0$ and a unique smooth solution $\Phi(t)$ of \eqref{gfloweqn} for $t\in [0, \varepsilon)$ with $\varepsilon=\varepsilon(\Phi_0)$.
\end{theorem}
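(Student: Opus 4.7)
The plan is to run the modified DeTurck trick of \textsection\ref{subsec:mdtt} to trade the diffeomorphism-induced degeneracy of $L$ for a genuine parabolic system, and then unwind by pulling back along a flow of diffeomorphisms. Fix the reference Spin(7)-structure $\wtd \Phi := \Phi_0$ and consider the modified evolution
\begin{equation*}
\frac{\partial \Phi}{\partial t} = \wtd L(\Phi), \qquad \Phi(0) = \Phi_0,
\end{equation*}
with $\wtd L$ as in Proposition~\ref{prop:strell}. By that proposition, $\wtd L$ is strongly elliptic at $\Phi_0$; since strong ellipticity is an open condition and $\wtd L$ is a second order quasilinear differential operator acting on sections of the admissible-$4$-form bundle $A \subset \Lambda^4 T^*M$, Theorem~\ref{thm:stdell} yields an $\varepsilon > 0$ and a unique smooth solution $\wtd \Phi(t)$, $t \in [0, \varepsilon)$, of this initial value problem, remaining in $A$ after possibly shrinking $\varepsilon$.

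Next I would undo the DeTurck modification. Define a time-dependent vector field $V(t) := -W(\wtd \Phi(t), \Phi_0)$ with $W$ as in \eqref{eq:detvfdef}, and solve the ODE
\begin{equation*}
\frac{\partial}{\partial t} \varphi_t(p) = V(t)(\varphi_t(p)), \qquad \varphi_0 = \mathrm{id}_M,
\end{equation*}
which has a smooth solution $\{\varphi_t\}_{t \in [0,\varepsilon)}$ of diffeomorphisms by compactness of $M$. Setting $\Phi(t) := \varphi_t^* \wtd \Phi(t)$ and using diffeomorphism invariance of $L$ together with the identity $\partial_t (\varphi_t^* \alpha_t) = \varphi_t^*(\partial_t \alpha_t) + \varphi_t^* \cL_{V(t)} \alpha_t$, one computes
\begin{equation*}
\partial_t \Phi(t) = \varphi_t^*\bigl( \wtd L(\wtd \Phi(t)) + \cL_{V(t)} \wtd \Phi(t) \bigr) = \varphi_t^*\bigl( L(\wtd \Phi(t)) \bigr) = L(\Phi(t)),
\end{equation*}
so $\Phi(t)$ solves \eqref{gfloweqn} with $\Phi(0) = \Phi_0$, giving short-time existence.

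For uniqueness, suppose $\Phi_1(t)$ and $\Phi_2(t)$ are two smooth solutions of \eqref{gfloweqn} on $[0,\varepsilon')$ with $\Phi_1(0) = \Phi_2(0) = \Phi_0$. The standard strategy is to show that each $\Phi_i$ can be written as $\psi_i(t)^* \wtd \Phi_i(t)$ where $\wtd \Phi_i$ solves the strictly parabolic DeTurck-modified flow with initial data $\Phi_0$ and $\psi_i(t)$ is generated by the corresponding time-dependent vector field. Concretely, for each $i$ one solves the ODE system obtained by demanding that $\psi_i(t)^{-1}$ generate the vector field $W(\Phi_i(t), \Phi_0) + (\psi_i)_* \cdot$ (a nonlinear ODE on $\mathrm{Diff}(M)$, equivalent to a harmonic-map-heat-type equation for $\psi_i$ with target $(M, \Phi_0)$), which admits a unique smooth short-time solution with $\psi_i(0) = \mathrm{id}$ by standard parabolic theory. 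Then $\wtd \Phi_i(t) := (\psi_i^{-1}(t))^* \Phi_i(t)$ solves $\partial_t \wtd \Phi_i = \wtd L(\wtd \Phi_i)$ with $\wtd \Phi_i(0) = \Phi_0$, so by uniqueness in Theorem~\ref{thm:stdell} we have $\wtd \Phi_1(t) = \wtd \Phi_2(t)$; uniqueness of the generating ODE then forces $\psi_1(t) = \psi_2(t)$ and hence $\Phi_1(t) = \Phi_2(t)$.

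The main obstacle, both conceptually and technically, is the uniqueness half: one must verify that the $(-4 T_8)$ correction added to the usual Ricci-flow DeTurck vector field in \eqref{eq:detvfdef} still yields a well-posed (parabolic, strictly short-time-solvable) equation for the diffeomorphisms $\psi_i$, so that the pullback argument closes up. Because $T_8$ is first-order in $\Phi$ and the leading symbolic behaviour of the resulting system is governed by the same $\tfrac{1}{2}\cL_{\wtd W} g$ term that drives the Ricci-flow DeTurck argument, the principal symbol computation in \eqref{eq:psymtilric} and Proposition~\ref{prop:strell} is precisely what guarantees this; one then invokes standard quasilinear parabolic theory to conclude both existence and uniqueness of $\psi_i$ on a common small time interval, completing the proof.
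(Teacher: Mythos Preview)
Your existence argument matches the paper's: fix $\wtd\Phi=\Phi_0$, solve the strongly parabolic DeTurck-modified flow $\partial_t\Bar\Phi=\wtd L(\Bar\Phi)$ by Proposition~\ref{prop:strell} and Theorem~\ref{thm:stdell}, then pull back along the flow of $-W(\Bar\Phi(t),\Phi_0)$.

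The uniqueness argument is where you diverge from the paper, and your treatment is too vague at the crucial point. You attempt a single-step inversion: produce diffeomorphisms $\psi_i$ so that $(\psi_i^{-1})^*\Phi_i$ solves the DeTurck-modified flow, then invoke parabolic uniqueness. But the equation you write for $\psi_i$ (``the ODE system obtained by demanding that $\psi_i^{-1}$ generate the vector field $W(\Phi_i,\Phi_0)+(\psi_i)_*\cdot$'') is not well-formed, and you then conflate ``ODE on $\mathrm{Diff}(M)$'' with ``harmonic-map-heat-type PDE'' without writing either down. The difficulty is genuine: the DeTurck vector field here is $W=\wtd W-4T_8$, and the $T_8$ part depends on the full Spin(7)-structure, not just the metric, so it does not sit inside the usual harmonic-map-heat-flow machinery without further argument.

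The paper handles this by a clean two-step decoupling. First, for each solution $\Phi_i$ it integrates the \emph{ordinary} differential equation $\partial_t(F_i)_t=-4(T_8)_{\Phi_i(t)}\circ(F_i)_t$, which is well-posed because $(T_8)_{\Phi_i(t)}$ is a given time-dependent vector field; pulling back by $(F_i)_t$ kills the $2\cL_{T_8}g$ term. Second, for the resulting $\Bar\Phi_i=(F_i)_t^*\Phi_i$ it runs the \emph{standard} harmonic map heat flow $\partial_t\varphi_i=\Delta_{g_i(t),g_0}\varphi_i$ to handle the remaining $\wtd W$ part, exactly as in Ricci flow. Parabolic uniqueness then gives $\hat\Phi_1=\hat\Phi_2$, hence $\varphi_1=\varphi_2$, hence $\Bar\Phi_1=\Bar\Phi_2$; a final ODE argument for $(F_i)^{-1}_t$ yields $\Phi_1=\Phi_2$. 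Your one-step route could likely be made to work (the $-4T_8$ correction is lower order in $\psi_i$), but you have not actually written down the equation for $\psi_i$ or checked its well-posedness, which is precisely what the paper's two-step split is designed to avoid.
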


\begin{proof}
Let $\wtd \Phi=\Phi_0$ and define $W(\Phi, \wtd \Phi)$ as in \eqref{eq:detvfdef}. Since the terms $8T_{b;al}T_{m;lb}-8T_{m;al}T_{b;lb}+2T_{a;lb}T_{m;lb}-|T|^2g$ are at most first order in $\Phi$, we deduce from Proposition~\ref{prop:strell} that the linearization of the operator
\begin{align*}
\wtd L(\Phi)= \left(-\Ric+2(\cL_{T_8}g) + 8T_{b;al}T_{m;lb}-8T_{m;al}T_{b;lb}+2T_{a;lb}T_{m;lb}-|T|^2g + 2 \Div T\right)\diamond \Phi + \cL_{W}\Phi  
\end{align*}
is strongly elliptic. We obtain from standard parabolic theory (Theorem~\ref{thm:stdell}) that there is a unique smooth solution $\wtd \Phi(t)$ for $t\in [0, \varepsilon)$ of the initial value problem
\begin{align*}
\frac{\partial}{\partial t} \Bar{\Phi}(t)&= \wtd L(\Bar{\Phi}(t)), \\
\Bar{\Phi}(0)&= \Phi_0.
\end{align*}
Let $\varphi_t:M\to M,\ t\in [0, \varepsilon)$ be the one-parameter family of diffeomorphisms defined by 
\begin{equation*}
\begin{aligned}
\frac{\partial}{\partial t} \varphi_t & = - W(\Bar{\Phi}(t), \wtd \Phi) \circ \varphi_t, \\
\varphi_0 & = \Id_M.
\end{aligned}
\end{equation*}
Since $M$ is compact, the family of diffeomorphisms $\varphi_t$ exists by \cite[Lemma 3.15]{Chow-Knopf} as long as the solutions $\Bar{\Phi}(t)$ exists. The family $\Phi(t)=\varphi^*_t(\Bar{\Phi}(t))$ then satisfies
\begin{align*}
\dfrac{\partial \Phi(t)}{\partial t}=\partial_t(\varphi^*_t(\Bar{\Phi}(t)))&=\varphi^*_t\left(\cL_{-W(t)}\Bar{\Phi}(t) +\partial_t\Bar{\Phi}(t)\right) \\
&= \varphi^*_t\left(\left(\cL_{-W(t)}\Bar{\Phi}(t)-\overline{\Ric}+2\cL_{\Bar{T}_8}\Bar{g}+ \overline{\text{l.o.t.}}+2 \overline{\Div} \Bar{T}\right)\diamond_{\Bar{\Phi}(t)} \Bar{\Phi}(t)+\cL_{W(t)}\Bar{\Phi}(t)  \right)\\
&= \left(   -\overline{\Ric}+2\cL_{\Bar{T}_8}\Bar{g}+ \overline{\text{l.o.t.}}+2 \overline{\Div} \Bar{T}\right) \diamond_{(\varphi^*_t(\Bar{\Phi}(t)))} (\varphi^*_t(\Bar{\Phi}(t)))\\
&= \left(-\Ric+2(\cL_{T_8}g) + 8T_{b;al}T_{m;lb}-8T_{m;al}T_{b;lb}+2T_{a;lb}T_{m;lb}-|T|^2g + 2 \Div T\right)\diamond \Phi
\end{align*}
with $\Phi(0)=\Id^*(\Bar{\Phi}(0))=\Phi_0$ and $\overline{\text{l.o.t}}=8\Bar{T}_{b;al}\Bar{T}_{m;lb}-8\Bar{T}_{m;al}\Bar{T}_{b;lb}+2\Bar{T}_{a;lb}\Bar{T}_{m;lb}-|\Bar{T}|^2_{\Bar{g}}\Bar{g}$. This proves the short-time existence of solutions to \eqref{gfloweqn}. 

\medskip

\noindent
We now prove uniqueness, by using the uniqueness of solutions to the harmonic map heat flow. Suppose $\Phi_i(t), i=1,2$ are solutions to \eqref{gfloweqn} with the same initial condition. Let $(F_i)_t:M\to M$ be a one-parameter family of diffeomorphisms given as
\begin{equation}
\begin{aligned}
\frac{\partial}{\partial t}(F_i)_t&=-4(T_8)_{\Phi_i(t)}\circ (F_i)_t,\\
(F_i)_0&= \Id_{M}.
\end{aligned}    
\end{equation}
Using \eqref{eq:liePhiexp}, we see that the family $\Bar{\Phi}_i(t)=(F_i)^*_t\Phi_i(t)$ solves
\begin{equation}
\begin{aligned}
 \frac{\pt}{\pt t}\Bar{\Phi}_i(t)&=\left(-\Ric_{\Bar{g}_i(t)}-4(\Bar{\del} \Bar{T}_8)_{7}+2\Div \Bar{T}_{8} + \overline{\text{l.o.t}}\right)\diamond \Bar{\Phi}_i(t) \\
 \Bar{\Phi}_{i}(0)=\Phi_0.
\end{aligned}    
\end{equation}

\noindent
If we set $\hat{\Phi}_{i}(t)=((\varphi_{i})^{-1}_t)^*\Bar{\Phi}_i(t)$ by defining the diffeomorphisms $\varphi_i(t)$ as $\pt_t(\varphi_i)_t=-\wtd W(\hat{g}_i(t), \wtd g)$ then it is known \cite[\textsection 4.3]{Chow-Knopf} that $\varphi_i(t):M\to M,\ t\in [0, \varepsilon_i^{'})$ is a solution to
\begin{equation}
 \begin{aligned}
 \frac{\pt}{\pt t}\varphi_i(t)&= \Delta_{g_i(t), g_0} (\varphi_i)(t),\\
 \varphi_i(0)&=\Id_M
 \end{aligned}   
\end{equation}
which is the harmonic map heat flow from $(M^8, g_0)$ to $(M^8, g_{i}(t))$ with the initial value being the identity map. Again using \eqref{eq:liePhiexp}, we see that the maps $\hat{\Phi}_i(t)$ satisfy
\begin{equation}\label{mthmaux4}
\begin{aligned}
\frac{\pt}{\pt t}\hat{\Phi}_i(t)&= \left(-\Ric_{\hat{g}_i(t)}+\frac 12 \cL_{\wtd W(\hat{g}_i(t), \wtd g)}\hat{g}_i(t)+(\hat{\del}\wtd W-4\hat{\del}\hat{T}_8)_{7}+2\Div \hat{T}_8 + \widehat{\text{l.o.t.} } \right) \diamond \hat{\Phi}_i(t)\\
\hat{\Phi}_i(0)&=\Phi_0.
\end{aligned}    
\end{equation}
for $t\in [0, \varepsilon_{i}^{'}).$ Since we have proved above that the operator in \eqref{mthmaux4} is parabolic, we know from the standard theory of uniqueness of parabolic equations that $\hat{\Phi}_1(t)=\hat{\Phi}_2(t)$ for all $t\in [0, \varepsilon^{'})$ with $\varepsilon^{'}=\text{min}\{\varepsilon_1^{'}, \varepsilon_2^{'}\}$ and as a result $\hat{g}_1(t)=\hat{g}_2(t)$ which gives $\varphi_1(t)=\varphi_2(t)$. Consequently
\begin{align*}
 \Bar{\Phi}_1(t)=(\varphi_1)^*_t\hat{\Phi}_1(t)=(\varphi_2)^*_t\hat{\Phi}_2 (t)=\Bar{\Phi}_2(t),\ \ \ \ \ \ \ \ \text{for\ all}\ \ \ t\in[0, \varepsilon^{'}).  
\end{align*}
Finally, we have
\begin{align*}
0=\frac{\pt \Id_M}{\pt t}&=\frac{\pt}{\pt t}\left((F_i)_t\circ (F_i)^{-1}_t \right)\\
&= \frac{\pt (F_i)_t}{\pt t}\circ (F_i)^{-1}_t + ((F_i)_t)_*\left(\frac{\pt (F_i)^{-1}_t}{\pt t} \right)\\
&= -4(T_8)_{\Phi_i(t)}+ ((F_i)_t)_*\left(\frac{\pt (F_i)^{-1}_t}{\pt t} \right)
\end{align*}
which gives
\begin{align*}
\left(\frac{\pt (F_i)^{-1}_t}{\pt t} \right)= 4((F_i)^{-1}_t)_*(T_8)_{\Phi_i(t)}=4(T_8)_{(F_i)^{*}_t \Phi_i(t)}\circ (F_i)^{-1}_t=4(T_8)_{\Bar{\Phi}_i(t)}\circ (F_i)^{-1}_t  . 
\end{align*}
But since we proved above that $\Bar{\Phi}_1(t)=\Bar{\Phi}_2(t),$ we get $(F_1)^{-1}_t=(F_2)^{-1}_t$ and hence $\Phi_1(t)=\Phi_2(t)$ for all $t\in [0, \varepsilon^{'})$ which proves the uniqueness of solutions to \eqref{gfloweqn} and completes the proof of the theorem.
\end{proof}

\begin{remark}
A particularly interesting flow of Spin(7)-structures is the "coupling" of the Ricci flow of the metric and the isometric/harmonic flow of Spin(7)-structures
$$ \frac{\partial}{\partial t} \Phi(t) = (- \Ric + \Div T) \diamond \Phi.$$
This flow induces \emph{precisely} the Ricci flow $\frac{\partial}{\partial t} g = - 2 \Ric$ on the metric, and the only other thing it does to the Spin(7)-structure $\Phi$ is to deform it by the isometric flow which was studied in detail in \cite{dle-isometric}. This ``coupling'' of Ricci flow with the isometric flow has good short-time existence and uniqueness. This can be seen by going through the modified DeTurck's trick in \textsection~\ref{subsec:mdtt} and choosing $W=\wtd W$ and then checking that the resulting modified flow is strictly parabolic. The isometric flow of Spin(7)-structures has many good properties, in particular there is an almost monotonicity formula for the solutions to the flow (see \cite{dle-isometric}). It would be interesting to study whether the Ricci flow coupled with the harmonic flow has similar analytic properties. 
\end{remark}

\section{Solitons}\label{sec:solitons}

\noindent
Let $M^8$ be an $8$-manifold. A \emph{soliton} for \eqref{gfloweqn} is a triple $(\Phi, Y, \lambda)$ with $Y\in \Gamma(TM)$ and $\lambda\in \bR$ such that
\begin{align}\label{sol1}
\left(-\Ric+2(\cL_{T_8}g) + T*T-|T|^2g + 2 \Div T\right) \diamond \s7=\lambda \Phi+\cL_Y\Phi
\end{align}
where $(T*T)_{ij}= 8T_{b;il}T_{j;lb}-8T_{j;il}T_{b;lb}+2T_{i;lb}T_{j;lb}$. Those Spin(7)-structures which satisfy \eqref{sol1} are special as they give self-similar solutions to \eqref{gfloweqn}. Recall that a self-similar solution to \eqref{gfloweqn} is a solution of the form 
\begin{align*}
\Phi(t) = \lambda(t)^4 \varphi(t)^* \Phi(0)
\end{align*}
where $\lambda(t)$ are time-dependent scalings and $\varphi(t):M\to M$ are a one-parameter family of diffeomorphisms. Note that the power of $4$ on $\lambda(t)$ is just for convenience in calculations. A straightforward calculation (for instance see \cite[Lemma 2.17]{dgk-isometric}, \cite[Prop. 2.11]{dle-isometric} or \cite[Prop. 1.55]{FLMS}) shows that the solitons for \eqref{gfloweqn} and self-similar solutions are in one-to-one correspondence. 

\medskip

\noindent
We say a soliton $(\Phi, Y, \lambda)$ is expanding if $\lambda>0$; steady if $\lambda=0$; and shrinking if $\lambda <0$.

\medskip

\noindent
We now derive the condition satisfied by the metric $g$ induced by $\Phi$ and $\Div T$ when $(\Phi, Y, \lambda)$ is a soliton, which we expect to have further use.

\medskip

\begin{proposition}\label{prop:gsoliton}
Let $(\Phi, Y, \lambda)$ be a solitons as defined in \eqref{sol1}. Then the induced metric $g$ satisfies
\begin{align}\label{gsoliton1}
-R_{ij}+2(\cL_{T_8}g)_{ij}+4T_{b;il}T_{j;lb}+4T_{b;jl}T_{i;lb}-4T_{j;il}T_{b;lb}-4T_{i;jl}T_{b;lb}+2T_{i;lb}T_{j;lb}-|T|^2g_{ij}=\frac{\lambda}{4}g_{ij}+\frac 12(\cL_Yg)_{ij},
\end{align}
and $\Div T$ satisfies
\begin{align}\label{gsoliton2}
(\Div T)_{ij}+4T_{b;il}T_{j;lb}-4T_{b;jl}T_{i;lb}-4T_{j;il}T_{b;lb}+4T_{i;jl}T_{b;lb}= \frac 12\left(T(Y)+(\del Y)_7 \right)_{ij}.
\end{align}
\end{proposition}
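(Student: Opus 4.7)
The plan is to rewrite both sides of the soliton equation \eqref{sol1} in the form $(h+X)\diamond \Phi$ with $h$ a symmetric $2$-tensor and $X\in \Omega^2_7$, and then apply Proposition~\ref{prop:diaproperties1}, which says that $\diamond$ maps $S^2\oplus \Omega^2_7$ isomorphically onto $\Omega^4_{1\oplus 7\oplus 35}$. Since this map sends the scalar piece into $\Omega^4_1$, the traceless symmetric piece into $\Omega^4_{35}$, and $\Omega^2_7$ into $\Omega^4_7$, equating the diamond-inputs on the two sides separates cleanly into independent symmetric and $\Omega^2_7$ equations.

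For the right-hand side, a direct computation from the definition \eqref{eq:diadefn2} with $A=g$ gives $g\diamond \Phi=4\Phi$, so $\lambda\Phi=\tfrac{\lambda}{4}\,g\diamond \Phi$. Combined with \eqref{eq:liePhiexp}, this yields
\begin{equation*}
\lambda\Phi+\cL_Y\Phi=\Bigl(\tfrac{\lambda}{4}g+\tfrac{1}{2}\cL_Y g+T(Y)+(\del Y)_7\Bigr)\diamond \Phi,
\end{equation*}
where $\tfrac{\lambda}{4}g+\tfrac{1}{2}\cL_Y g$ is symmetric and $T(Y)+(\del Y)_7\in \Omega^2_7$. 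The left-hand side of \eqref{sol1} is already in the desired form with symmetric part $-\Ric+2\cL_{T_8}g+T{*}T-|T|^2 g$ and $\Omega^2_7$ part $2\Div T$. Applying the injectivity of $\diamond$ on $S^2\oplus \Omega^2_7$, I equate symmetric parts to obtain \eqref{gsoliton1}, and $\Omega^2_7$ parts to obtain $2\Div T=T(Y)+(\del Y)_7$, which is \eqref{gsoliton2}.

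The only bookkeeping to verify beforehand is that each tensor sits in its claimed summand. Symmetry of $\Ric$, $\cL_{T_8}g$, $|T|^2 g$, $\cL_Y g$ is automatic; symmetry of $T{*}T$ is built into its definition in \eqref{gfloweqn} as the symmetric tensor paired with $h_{am}$ in the energy variation of Lemma~\ref{lemma:Energyvar}; and that $\Div T$, $T(Y)$, and $(\del Y)_7$ all belong to $\Omega^2_7$ is part of the setup in \textsection~\ref{sec:prelims}. I do not anticipate any genuine obstacle: once these identifications are in place, the proposition reduces to a one-line application of the diamond-decomposition to \eqref{sol1} via \eqref{eq:liePhiexp} and $g\diamond\Phi=4\Phi$.
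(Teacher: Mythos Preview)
Your proposal is correct and follows essentially the same argument as the paper: both rewrite $\lambda\Phi$ as $\tfrac{\lambda}{4}g\diamond\Phi$, invoke \eqref{eq:liePhiexp} for $\cL_Y\Phi$, and then separate the symmetric and $\Omega^2_7$ parts using the fact that $\diamond$ sends these summands into the mutually orthogonal pieces $\Omega^4_{1+35}$ and $\Omega^4_7$ (the paper phrases this as orthogonality, you phrase it via the injectivity in Proposition~\ref{prop:diaproperties1}, but the content is the same).
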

\begin{proof}
The definition of the $\diamond$ operator in \eqref{eq:diadefn1} implies $\Phi=\left(\dfrac g4 \diamond \Phi \right)$. Moreover, \eqref{eq:liePhiexp} gives $\cL_Y\Phi=\left(\frac 12 \cL_Yg+T(Y)+(\del Y)_7 \right)\diamond \Phi$. Putting these expressions in \eqref{sol1} while decomposing the tensor $T*T$ into symmetric and skew-symmetric parts and using the fact that $\Omega^4_{1+ 35}$ is orthogonal to $\Omega^4_7$ gives \eqref{gsoliton1} and \eqref{gsoliton2}. 
\end{proof}

\medskip

\noindent
We can prove the following non-existence theorem for compact expanding solitons of \eqref{gfloweqn}.

\begin{proposition}\label{prop:solmainprop}
 Let $(M^8, \Phi, Y, \lambda)$ satisfy the soliton equation \eqref{sol1}. 
\begin{enumerate}
\item There are no compact expanding solitons of \eqref{gfloweqn}.
\item The only compact steady solitons of \eqref{gfloweqn} are given by torsion-free Spin(7)-structures.
\end{enumerate}
\end{proposition}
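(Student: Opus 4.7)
The plan is to derive a pointwise scalar identity by tracing the metric soliton equation \eqref{gsoliton1} of Proposition~\ref{prop:gsoliton} and then integrate over the compact manifold $M$ so that the divergence terms drop out. This reduces both parts of the proposition to a single integrated identity expressing $\lambda$ as a non-positive multiple of $\int_M|T|^2\vol$.

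First I would take the $g$-trace of \eqref{gsoliton1}. The right hand side immediately becomes $2\lambda+\Div Y$ since $\tr g=8$ and $\tr(\cL_Yg)=2\Div Y$. For the left hand side the nontrivial step is to identify the various contractions of $T$ in terms of Spin(7)-invariant scalars. Specifically, I would use the identity \eqref{T8des}, which together with the antisymmetry of $T_{m;ab}$ in its last two indices yields
\begin{equation*}
T_{i;il}=-(T_8)_l,\qquad T_{b;lb}=(T_8)_l,
\end{equation*}
so that $-8\,T_{i;il}T_{b;lb}=8|T_8|^2$ and $2\,T_{i;lb}T_{i;lb}=2|T|^2$. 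The remaining cubic piece $8\,T_{b;il}T_{i;lb}$ is precisely $8\,T_{a;jb}T_{j;ba}$ after relabeling, which by the scalar curvature formula \eqref{scalar1} equals $R-8\Div T_8-8|T_8|^2$. Combining these with the $\tr\Ric=R$ and $\tr(\cL_{T_8}g)=2\Div T_8$ pieces, all the $R$ and $|T_8|^2$ terms cancel, and the traced soliton equation collapses to
\begin{equation*}
-4\Div T_8-6|T|^2=2\lambda+\Div Y.
\end{equation*}

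Next I would integrate this identity over the compact manifold $M$ with respect to $\vol_\Phi$. Since $T_8$ and $Y$ are vector fields on $M$, the divergence theorem gives $\int_M\Div T_8\,\vol=0$ and $\int_M\Div Y\,\vol=0$, yielding
\begin{equation*}
\lambda=-\frac{3}{\Vol(M)}\int_M|T|^2\,\vol.
\end{equation*}
Part (1) is immediate: the right hand side is non-positive, so $\lambda>0$ is impossible, i.e.\ no compact expanding solitons exist. Part (2) follows from observing that when $\lambda=0$ the identity forces $|T|^2\equiv0$ on $M$, so $\Phi$ is torsion-free; conversely any torsion-free Spin(7)-structure trivially solves \eqref{sol1} with $\lambda=0$ and $Y=0$.

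There is no serious obstacle here beyond carefully executing the trace. The only place one has to be attentive is in the cubic torsion terms, both because $T$ lies in the reducible space $\Lambda^1_8\otimes\Lambda^2_7$ and because one must keep track of the antisymmetry of $T_{m;ab}$ in its last two indices when identifying $T_{i;il}$ and $T_{b;lb}$ with $\pm T_8$; it is essential that the $R$ and $|T_8|^2$ contributions cancel exactly so that the compactly supported divergences are the only nonconstant terms left to integrate away.
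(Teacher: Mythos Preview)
Your proposal is correct and follows essentially the same argument as the paper: trace \eqref{gsoliton1}, use \eqref{scalar1} to eliminate $R$ and the $|T_8|^2$ terms, arrive at $-4\Div T_8-6|T|^2=2\lambda+\Div Y$, and integrate over $M$ to obtain $-3\int_M|T|^2\vol=\lambda\Vol(M)$. Your write-up is in fact more explicit than the paper's about how the individual traced torsion terms are identified via \eqref{T8des}, but the strategy and the key identity are identical.
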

\begin{proof}
Taking the trace of \eqref{gsoliton1} gives
\begin{align*}
-R+4\Div T_8+8T_{b;il}T_{i;lb}+8|T_8|^2+2|T|^2-8|T|^2=2\lambda + \Div Y
\end{align*}
which on using the expression for the scalar curvature \eqref{scalar1} simplifies to
\begin{align}\label{gsol3}
-4\Div T_8-6|T|^2=2\lambda + \Div Y.
\end{align}
Integrating \eqref{gsol3} on compact $M$ gives
\begin{align*}
-3\int_M |T|^2\vol = \lambda \text{Vol}(M).
\end{align*}
So $\lambda \leq 0$ and $\lambda=0$ if and only if $T\equiv 0$.
\end{proof}

\printbibliography

\noindent
Institut f\"ur Mathematik, Humboldt-Universit\"at zu Berlin, Rudower Chaussee 25, 12489 Berlin.\\
\emph{Current address:} Fachbereich Mathematik, Universität Hamburg, Bundesstraße 55, 20146 Hamburg, Germany.\\
\href{mailto:shubham.dwivedi@uni-hamburg.de}{shubham.dwivedi@uni-hamburg.de}

\end{document}